\documentclass[11pt]{article}
\usepackage[utf8]{inputenc}
\usepackage{amsmath,amssymb,amsthm}
\usepackage[margin=2.6cm]{geometry}
\usepackage{enumitem}
\usepackage{booktabs}
\usepackage{array}
\usepackage{tikz}
\usetikzlibrary{arrows.meta,calc}

\theoremstyle{plain}
\newtheorem{theorem}{Theorem}[section]
\newtheorem{proposition}[theorem]{Proposition}
\newtheorem{lemma}[theorem]{Lemma}
\newtheorem{corollary}[theorem]{Corollary}
\theoremstyle{definition}
\newtheorem{definition}[theorem]{Definition}
\newtheorem{countermodel}[theorem]{Countermodel}
\newtheorem{example}[theorem]{Example}
\theoremstyle{remark}
\newtheorem{remark}[theorem]{Remark}

\newcommand{\SDS}{\mathsf{SDS}}
\newcommand{\Aut}{\operatorname{Aut}}
\newcommand{\rank}{\operatorname{rank}}
\newcommand{\supp}{\operatorname{supp}}
\newcommand{\cyc}{c}
\newcommand{\Sbar}{\overline{\Sigma}}
\newcommand{\Dbar}{\overline{\Delta}}
\newcommand{\rresp}{r_{\mathrm{resp}}}

\title{Axioms and internal rigidity of spectral duality structures}
\author{J.~A.~Vega Coso \\
        \small IUFFyM, Universidad de Salamanca \\
        \small Salamanca, Spain}
\date{}

\begin{document}
\maketitle

\begin{abstract}
Spectral duality structures (SDS) were introduced to explain why a
distinguished family of initial distributions makes the absorption
probabilities of resetting processes insensitive to the resetting mechanism. Their algebraic theory was developed in a previous paper.
Here we study the axioms themselves. We prove that the six defining
conditions are mutually independent. For the first one, that the set
of sites be finite and non-empty, independence rests on
non-emptiness: finiteness follows from the full-rank condition for
complete structures. We then determine what each condition
contributes. The determination rests on a structure theorem for
insensitive distributions that holds for an arbitrary permutation and
an arbitrary positive weight: every insensitive distribution is
supported on a union of cycles balanced at a common level, and the
level of a cycle is the geometric mean of the weight on it. From this
theorem the role of each axiom can be read off directly. Positivity
selects the admissible root. The constant orbit product is equivalent
to the existence of an insensitive distribution of full support. The
involution is equivalent to the equality between the response rank
and the number of free weight parameters. Without the duality
relation, the insensitivity of a single distribution is cheap and,
when the number of modes exceeds the number of sites, generically
isolated; together with the other axioms, duality is what
turns it into the structural neutrality of the separatrix. The full-rank
condition splits into two layers: a weaker condition on an explicit
matrix $M$ is equivalent both to the response rank being maximal and
to the separatrix being the whole $C^\ast$-level set, while full rank,
under the remaining axioms, additionally rules out insensitivity at
any other level. In the last
part we show that a single SDS is internally rigid. A compatible
semigroup structure forces the weight to be constant, abelian group
structures compatible with the involution obey a precise arithmetic
condition, quotients are governed by a simple lattice of congruences,
and the automorphism group is computed explicitly. We also show that
the axioms do not determine the orientation of the response: in the
two-site case we give the exact condition under which the geometric
and analytic sign criteria agree.
\end{abstract}

\medskip
\noindent\textbf{MSC 2020:} 15A03, 60J10, 20B25.\quad
\textbf{Keywords:} spectral duality structure; independence of axioms;
neutral distributions; response rank; rigidity.

\section{Introduction}
\label{sec:intro}

Stochastic resetting, in which a process is interrupted and restarted
from a prescribed distribution, has become a standard device for
modelling search and first-passage problems \cite{EMS}. In a series
of papers on random walks with geometric resetting
\cite{PaperI,PaperII,PaperIII,PaperIV}, the author found that for a
distinguished family of initial distributions the probability of
leaving an interval through a given end does not depend on the
resetting parameter --- and, as Paper~I shows, not even on the
resetting mechanism --- and that its common value is
$C^\ast=1/(1+\sqrt K)$, for a constant $K$ determined by the walk. The
explanation turned out to be algebraic rather than probabilistic. The
two exit channels admit spectral representations whose amplitudes are
related by a reflection of the sites and by a positive weight that does
not depend on the spectral mode. Combined with the corresponding
balance condition on the initial distribution, this mode-independent
duality forces the insensitivity.

Paper~V \cite{PaperV} distilled the mechanism into an abstract notion.
A \emph{spectral duality structure} (SDS) consists of a finite set of
sites $Z$, an involution $\sigma$ of $Z$, a positive weight $\kappa$
whose product over each orbit is a constant $K$, and a family of
spectral amplitudes $A_\nu$ related to a second family $B_\nu$ by the
duality $B_\nu(z)=\kappa(z)A_\nu(\sigma z)$. Paper~VI \cite{PaperVI}
developed the algebra of the \emph{class} of SDS: a parametrisation of
the weights by $(K,r)$, Cartesian product and disjoint union, a
category with a graded symmetric monoidal structure, a classification
up to isomorphism, and the response rank theorem.

The present paper asks a different question. It is not concerned with
what can be built from SDS, but with what is \emph{inside} one: why
the axioms have the form they have, whether any of them is redundant,
what each of them buys, and how much internal structure a single
object can carry. This is the kind of foundational analysis that
should accompany any new axiomatic notion, and it has four parts. The
first is \emph{independence}: whether any axiom follows from the
others. The second is the \emph{role} of each axiom, which is a
different question, since an axiom can be independent and still idle,
or independent and stronger than what the theorems use. The third is
\emph{characterisation}: what the axioms determine jointly, and what
they leave undetermined. The fourth is the \emph{internal structure} of
a single object: which operations, quotients and symmetries it admits.
The analysis turns out to be more informative than one might expect.

\paragraph{Method.} The key step is a reduction lemma, which turns
the insensitivity of a distribution, a functional identity in the
spectral parameter, into a finite-dimensional linear condition. We
solve that condition not within the axioms but for an arbitrary
permutation of the sites and an arbitrary positive weight. This makes
it possible to switch the axioms on and off one at a time and to see
exactly what each of them changes. Figure~\ref{fig:axioms} summarises
the resulting picture.

\paragraph{Thesis.} The axioms are independent, and each one does an
identifiable piece of work. That work, however, is not always where
the original derivation placed it. The mechanism of insensitivity uses
neither the involution nor the pointwise constancy of the orbit
product: for any permutation and any positive weight, a balanced cycle
is insensitive at the level $1/(1+g_O)$ fixed by the geometric mean
$g_O$ of the weight on it. The involution and the constant product
then do two separate things. Together they make all these levels
coincide, with common value $C^\ast$; and the involution, given the
pointwise product condition, is what makes the \emph{response} of the
structure match its \emph{moduli}. A single SDS, finally, is rigid:
it admits essentially no compatible internal operations, its
quotients and symmetries are completely described by its orbit data,
and the interesting structure lies in the class, which is the object
of \cite{PaperVI} and of subsequent work.

\paragraph{Main results.}
\begin{enumerate}[leftmargin=*]
\item \emph{Independence} (Theorem~\ref{thm:independence}): none of
  the conditions (A1)--(A4), (B1), (B2) follows from the others.
\item \emph{Structure of insensitive distributions}
  (Theorem~\ref{thm:master}): for any permutation $\sigma$ and any
  positive weight, under full rank and duality, a distribution is
  insensitive to the spectral parameter at level $c$ if and only if
  it is supported on a union of cycles of geometric mean
  $(1-c)/c$, on each of which it satisfies a balance relation.
\item \emph{Role of each axiom} (Section~\ref{sec:roles}):
  \begin{itemize}[nosep]
  \item (A3) selects the positive root and places all levels
    in $(0,1)$.
  \item (A4) is equivalent to the existence of an insensitive
    distribution of full support (Corollary~\ref{cor:A4}).
  \item (A2) is equivalent to the equality between the response rank
    and the number of weight moduli (Corollary~\ref{cor:A2}).
  \item without (B2), the insensitivity of a single distribution is
    cheap and, for $N>m$, generically isolated (Propositions~\ref{prop:cheap}
    and~\ref{prop:isolated}); structural neutrality needs (B2)
    together with the other axioms
    (Theorem~\ref{thm:characterisation}).
  \item (B1) splits into two layers (Theorems~\ref{thm:rankM}
    and~\ref{thm:levelset}).
  \end{itemize}
\item \emph{Characterisation} (Theorem~\ref{thm:characterisation}):
  under all the axioms, the insensitive distributions are exactly the
  separatrix $\Sbar$, of dimension $f+k-1$, and the only insensitive
  value is $C^\ast$.
\item \emph{Orientation} (Section~\ref{sec:orientation}): the
  axioms do not fix the sign of the response; in the two-site case the
  geometric and analytic criteria agree exactly where the structural
  ratio $R$ moves away from $1$, and disagree where it moves towards
  it (Theorem~\ref{thm:orient}).
\item \emph{Internal rigidity} (Section~\ref{sec:rigidity}): weight
  compatible semigroup structures force $\kappa$ to be constant; the
  arithmetic of compatible abelian group structures is determined
  exactly; quotients are classified by congruences; and the
  automorphism group is computed.
\end{enumerate}

\paragraph{Organisation.} Section~\ref{sec:setting} fixes the axioms,
the analytic setting and a reduction lemma.
Section~\ref{sec:independence} proves independence.
Section~\ref{sec:roles} determines the role of each axiom and contains
the structure theorem. Section~\ref{sec:characterisation} states what
the axioms characterise and what they do not.
Section~\ref{sec:orientation} shows that the orientation of the
response is not determined by the axioms.
Section~\ref{sec:rigidity} studies the internal structure of a single
skeleton. Section~\ref{sec:conclusions} concludes, and
Section~\ref{sec:further} lists further directions.

\section{Setting}
\label{sec:setting}

\subsection{The axioms}

We follow the definitions of \cite[Definitions~2.1 and~2.3]{PaperVI},
separated into elementary conditions. It is convenient to regard both
families of amplitudes as data, and the duality as a relation between
them rather than as a definition of the second family. This is how they
arise in the realisations, where $A_\nu$ and $B_\nu$ are the spectral
amplitudes of two distinct absorption channels \cite{PaperIII}. It is
also the only formulation in which the question of the independence of
the duality relation makes sense.

\begin{definition}[Axioms]
\label{def:axioms}
Consider data $(Z,\sigma,\kappa,K,\{A_\nu\}_{\nu=1}^{N},
\{B_\nu\}_{\nu=1}^{N})$, where $Z$ is a set, $\sigma:Z\to Z$ is a map,
$\kappa:Z\to\mathbb R$, $K\in\mathbb R$, and $A_\nu,B_\nu:Z\to\mathbb R$.
Consider the conditions:
\begin{description}[leftmargin=3.2em,style=nextline]
\item[(A1)] $Z$ is finite and non-empty.
\item[(A2)] $\sigma$ is an involution: $\sigma^2=\mathrm{id}_Z$.
\item[(A3)] $\kappa(z)>0$ for every $z\in Z$.
\item[(A4)] $\kappa(z)\,\kappa(\sigma z)=K$ for every $z\in Z$.
\item[(B1)] The vectors
  $\mathbf A(z)=(A_1(z),\dots,A_N(z))\in\mathbb R^N$, $z\in Z$, are
  linearly independent.
\item[(B2)] $B_\nu(z)=\kappa(z)\,A_\nu(\sigma z)$ for every $\nu$ and
  every $z$.
\end{description}
A \emph{weighted dual skeleton} is a triple $\mathcal E=(Z,\sigma,\kappa)$
satisfying (A1)--(A4). A \emph{complete SDS} is a skeleton together
with amplitudes satisfying (B1) and (B2).
\end{definition}

Under (A3), the constant $K$ in (A4) is automatically positive. We
write $m=|Z|=2k+f$, where $k$ is the number of non-trivial orbits of
$\sigma$ and $f$ the number of fixed points. We write
$r_z=(\kappa(\sigma z)/\kappa(z))^{1/2}$ for the orbit ratios, so
that, by \cite[Proposition~3.1]{PaperVI},
$\kappa(z)=\sqrt K/r_z$ and $\kappa(\sigma z)=\sqrt K\,r_z$, with
$r_z=1$ and $\kappa(z)=\sqrt K$ at fixed points. Since
$r_{\sigma z}=1/r_z$, for a non-trivial orbit $O$ we write $r_O$ for the
ratio at a chosen representative; the other choice gives $1/r_O$, so
the class $[r_O]=\{r_O,r_O^{-1}\}$ is intrinsic. The universal
invariant is $C^\ast=1/(1+\sqrt K)$. We write
$\Dbar=\{\pi\in[0,1]^Z:\sum_z\pi_z=1\}$ for the closed simplex of
distributions on $Z$, and $\Delta$ for its interior, the open simplex
of distributions with all $\pi_z>0$.

In Section~\ref{sec:roles} we shall drop some axioms. To keep the
statements meaningful in that case, we always assume that $\sigma$ is
at least a \emph{bijection}, and we use the following notation. For a
bijection $\sigma$ of a finite set, $\cyc(\sigma)$ denotes the number
of its cycles (fixed points included) and $e(\sigma)$ the number of
its cycles of even length. For a cycle $O$ and a positive weight
$\kappa$ we put
\[
g_O=\Bigl(\prod_{z\in O}\kappa(z)\Bigr)^{1/|O|},
\]
the geometric mean of $\kappa$ on $O$. If $\sigma$ is an involution,
then $g_O=\sqrt{\kappa(z)\kappa(\sigma z)}$ on a non-trivial orbit and
$g_O=\kappa(z)$ at a fixed point. Under (A2)--(A4), therefore,
$g_O=\sqrt K$ for every orbit.

\subsection{The analytic setting}

Let $f_1,\dots,f_N:(0,1)\to\mathbb R$ be the spectral functions. In
the realisations, up to a common nonvanishing factor,
$f_\nu(\gamma)=(1-\gamma)/[1-\lambda_\nu(1-\gamma)]$, with eigenvalues
$\lambda_\nu$. Nothing below uses the probabilistic meaning of
$\gamma$: it is an abstract spectral parameter, and the results hold
for any family $f_\nu$ satisfying (H1). Geometric resetting is one
realisation among others. Following \cite[Section~8]{PaperVI}, put
\begin{equation}
u_z(\gamma)=\sum_{\nu}f_\nu(\gamma)A_\nu(z),
\qquad
s_z(\gamma)=\sum_{\nu}f_\nu(\gamma)\bigl[A_\nu(z)+B_\nu(z)\bigr],
\label{eq:us}
\end{equation}
so that under (B2), $s_z=u_z+\kappa(z)u_{\sigma z}$. For a probability
vector $\pi$ on $Z$, the \emph{coupling functional} is
\begin{equation}
C(\pi,\gamma)=\frac{\langle\pi,u(\gamma)\rangle}{\langle\pi,s(\gamma)\rangle},
\qquad \langle\pi,x\rangle=\sum_z\pi_zx_z .
\label{eq:coupling}
\end{equation}
The analytic results below use two standing hypotheses, which are
hypotheses of theorems and not axioms of the structure:
\begin{description}[leftmargin=3.2em,style=nextline]
\item[(H1)] $f_1,\dots,f_N$ are linearly independent as functions on
  $(0,1)$.
\item[(H2)] $s_z(\gamma)>0$ for every $z$ and every $\gamma$.
\end{description}
Hypothesis (H1) is automatic in the realisations when the eigenvalues
are distinct \cite[Remark~8.13]{PaperVI}. Hypothesis (H2) makes
\eqref{eq:coupling} well defined on the closed simplex $\Dbar$, and
holds in the realisations, where $s_z(\gamma)$ is the probability of
absorption before the next reset, starting from $z$.

\begin{definition}[Insensitive and neutral distributions]
\label{def:insensitive}
A distribution $\pi\in\Dbar$ is \emph{insensitive at level $c$} if
$C(\pi,\gamma)=c$ for every $\gamma\in(0,1)$. For a skeleton, the
\emph{(closed) separatrix} is
\[
\Sbar=\bigl\{\pi\in\Dbar:\ \sqrt K\,\pi_{\sigma z}=\kappa(z)\,\pi_z
  \ \text{ for every } z\in Z\bigr\}.
\]
This is the linear form of the neutrality condition
$\pi_z/\pi_{\sigma z}=r_z$ of \cite[Definition~8.2]{PaperVI}. Its
elements are called \emph{neutral}.
\end{definition}

By \cite[Lemma~8.3]{PaperVI}, every neutral distribution is insensitive
at level $C^\ast$. The converse is the content of
Section~\ref{sec:characterisation}. We recall the definition of the
response rank. Fix $\pi^\ast$ neutral in the open simplex and put
$\psi(\gamma)=(u(\gamma)-C^\ast s(\gamma))/\langle\pi^\ast,s(\gamma)\rangle$,
the gradient of $C(\cdot,\gamma)$ at $\pi^\ast$. Then
$\rresp=\dim\operatorname{span}\{\psi(\gamma):\gamma\in(0,1)\}$, the
span being taken in $\mathbb R^Z$ as in
\cite[Remark~8.5]{PaperVI}. The same definition makes sense whenever
an interior distribution is insensitive at level $C^\ast$, which is
how we use it for bijections in Section~\ref{subsec:A2}.

\subsection{A reduction lemma}

Write $T:\mathbb R^Z\to\mathbb R^N$, $Tx=\sum_zx_z\mathbf A(z)$, for
the amplitude map. Condition (B1) says exactly that $T$ is injective.
For $c\in\mathbb R$, let $L_c:\mathbb R^Z\to\mathbb R^Z$ be the linear
map
\begin{equation}
(L_cx)_{\sigma z}=(1-c)\,x_{\sigma z}-c\,\kappa(z)\,x_z,
\qquad z\in Z,
\label{eq:Lc}
\end{equation}
which is well defined when $\sigma$ is a bijection. Equivalently,
$(L_cx)_z=(1-c)x_z-c\,\kappa(\sigma^{-1}z)\,x_{\sigma^{-1}z}$; for an
involution this reads $(L_cx)_z=(1-c)x_z-c\,\kappa(\sigma z)x_{\sigma
z}$. We use the first form because it makes the balance relations
below read directly along the cycle.

\begin{lemma}[Reduction]
\label{lem:reduction}
Let $Z$ be finite, $\sigma$ a bijection, and assume \textup{(B2)},
\textup{(H1)} and \textup{(H2)}. Then $\pi\in\Dbar$ is insensitive at
level $c$ if and only if $T L_c\pi=0$.
\end{lemma}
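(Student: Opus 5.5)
Since (H2) makes $\langle\pi,s(\gamma)\rangle>0$ for every $\pi\in\Dbar$ and every $\gamma$, the condition $C(\pi,\gamma)=c$ for all $\gamma\in(0,1)$ is equivalent to the identity
\[
\langle\pi,u(\gamma)\rangle-c\,\langle\pi,s(\gamma)\rangle=0\qquad\text{for every }\gamma\in(0,1).
\]
The plan is to expand this expression in the spectral functions $f_\nu$, use (H1) to read off the vanishing of each coefficient, and recognise the resulting $N$ linear conditions as $TL_c\pi=0$.

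First, by (B2) we have $s_z=u_z+\kappa(z)u_{\sigma z}$. Hence
\[
\langle\pi,u\rangle-c\langle\pi,s\rangle=\sum_z(1-c)\pi_z u_z-\sum_z c\,\kappa(z)\pi_z u_{\sigma z}.
\]
Since $\sigma$ is a bijection of the finite set $Z$, we may reindex the first sum by $z\mapsto\sigma z$ and obtain
\[
\sum_z\bigl[(1-c)\pi_{\sigma z}-c\,\kappa(z)\pi_z\bigr]u_{\sigma z}=\sum_z (L_c\pi)_{\sigma z}\,u_{\sigma z}=\langle L_c\pi,u\rangle,
\]
using definition \eqref{eq:Lc} and reindexing back. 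This is the only place where care is needed: the form of $L_c$ in \eqref{eq:Lc} is exactly the one that makes this reindexing work for a general bijection.

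Next we substitute $u_z(\gamma)=\sum_\nu f_\nu(\gamma)A_\nu(z)$. This gives
\[
\langle L_c\pi,u(\gamma)\rangle=\sum_\nu f_\nu(\gamma)\sum_z (L_c\pi)_zA_\nu(z)=\sum_\nu f_\nu(\gamma)\,(TL_c\pi)_\nu .
\]
If $TL_c\pi=0$, this vanishes identically, so $\pi$ is insensitive at level $c$. Conversely, if it vanishes for every $\gamma\in(0,1)$, then by (H1) the coefficients $(TL_c\pi)_\nu$ all vanish, that is, $TL_c\pi=0$.

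There is no real obstacle. The only points to check are that (H2) legitimately lets us clear denominators on the whole closed simplex, including boundary $\pi$, and that the reindexing by the bijection $\sigma$ is consistent with \eqref{eq:Lc}. Note that (B1) is not used, so injectivity of $T$ plays no role here.
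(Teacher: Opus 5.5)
Your proof is correct and follows the paper's argument. You clear the denominator using (H2), expand via (B2) and the spectral representation, use (H1) to pass to the coefficient identity, and use the bijectivity of $\sigma$ to reindex into the form $TL_c\pi$; the only difference is cosmetic, since you reindex the scalar functions $u_z$ before invoking (H1), whereas the paper invokes (H1) first and then collects the coefficients of the vectors $\mathbf A(\sigma z)$.
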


\begin{proof}
$C(\pi,\gamma)=c$ for every $\gamma$ is equivalent to
$\langle\pi,u(\gamma)-c\,s(\gamma)\rangle=0$ for every $\gamma$. By
\eqref{eq:us} and (B2), this is
\[
\sum_\nu f_\nu(\gamma)\sum_z\pi_z\Bigl[(1-c)A_\nu(z)-c\,\kappa(z)
A_\nu(\sigma z)\Bigr]=0 .
\]
By (H1) the coefficient of each $f_\nu$ vanishes, so the condition is
equivalent to the vector identity
$\sum_z\pi_z[(1-c)\mathbf A(z)-c\,\kappa(z)\mathbf A(\sigma z)]=0$.
Since $\sigma$ is a bijection, we may collect the coefficient of
$\mathbf A(w)$ for $w=\sigma z$, and the left-hand side becomes
$\sum_{z}\bigl[(1-c)\pi_{\sigma z}-c\,\kappa(z)\pi_z\bigr]\mathbf
A(\sigma z)=T L_c\pi$.
\end{proof}

\begin{remark}
\label{rem:modewise}
The lemma is the abstract form of the mode-by-mode reduction. The
insensitivity problem, a functional identity in $\gamma$, becomes a
finite-dimensional linear problem. No probabilistic structure is
involved. What enters is a spectral representation \eqref{eq:us}, the
linear independence (H1), and a duality (B2) in which the weight
$\kappa$ does not depend on the mode $\nu$. The last point is
essential: it is what allows a single distribution to balance all
modes at once.
\end{remark}

\section{Independence of the axioms}
\label{sec:independence}

\begin{theorem}[Independence]
\label{thm:independence}
None of the conditions \textup{(A1)}, \textup{(A2)}, \textup{(A3)},
\textup{(A4)}, \textup{(B1)}, \textup{(B2)} is a consequence of the
others.
\end{theorem}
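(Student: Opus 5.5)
The plan is the standard one for independence: for each of the six conditions, exhibit data $(Z,\sigma,\kappa,K,\{A_\nu\},\{B_\nu\})$ that satisfy the other five and violate that one. The data are small and explicit, so each verification is a finite check. The one subtlety is (A1). The abstract notes that finiteness follows from (B1) for complete structures: linearly independent vectors in $\mathbb R^N$ number at most $N$, so $Z$ is finite. The countermodel for (A1) must therefore violate non-emptiness. Take $Z=\emptyset$, with $\sigma$ the empty map and $\kappa$ the empty function, and $K=1$, $N=1$, $A_1,B_1$ empty. Then (A2)--(A4) and (B2) hold vacuously, and (B1) holds because the empty family of vectors is linearly independent. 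I expect this to be the main conceptual point of the proof, and I would remark explicitly that no model can violate finiteness alone.

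For the remaining axioms I would base the countermodels on a one- or two-point set.

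\textbf{(A2).} Take $Z=\{1,2,3\}$ with $\sigma$ the $3$-cycle $1\to2\to3\to1$, $\kappa\equiv1$, $K=1$. Then (A4) holds since $\kappa(z)\kappa(\sigma z)=1$, but $\sigma^2\neq\mathrm{id}$. Take $N=3$ and $\mathbf A(z)=e_z$, so (B1) holds, and define $B_\nu$ by (B2).

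\textbf{(A3).} Take $Z=\{1\}$, $\sigma=\mathrm{id}$, $\kappa(1)=-1$, $K=1$, $N=1$, $A_1=1$, and $B_1=-1$ by (B2). Then (A4) holds because $(-1)(-1)=1$, while (A3) fails.

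\textbf{(A4).} Take $Z=\{1,2\}$ with $\sigma=\mathrm{id}$, $\kappa(1)=1$, $\kappa(2)=2$, $K=1$. This satisfies (A2) and (A3), but $\kappa(2)^2=4\neq K$, and no choice of $K$ repairs it. Take $\mathbf A(z)=e_z$ and define $B$ by (B2).

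\textbf{(B1).} Take the trivial skeleton $Z=\{1\}$, $\kappa=1=K$, with $N=1$ and $A_1=0$, $B_1=0$. Then (B2) holds, but the single vector $\mathbf A(1)=0$ is not linearly independent.

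\textbf{(B2).} Take the same one-point skeleton with $A_1=1$ and $B_1=0\neq\kappa(1)A_1(1)=1$.

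Apart from the empty-set point, each check is a one-line verification. The only other thing to confirm is that every countermodel fits the format of Definition~\ref{def:axioms}: $\sigma$ is a map $Z\to Z$, $\kappa$ is real-valued, and $K$ is real. The key design choice is that in the (A3) countermodel $\kappa$ may be negative, since only (A3) imposes positivity. In the (A4) countermodel, $K$ is a fixed datum, and (A4) fails for every $K$ because the two fixed points force $K=1$ and $K=4$ simultaneously. I would present the six countermodels in a table, marking for each which axiom fails.
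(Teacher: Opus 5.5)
Your proposal is correct and follows the paper's approach: one countermodel per condition, with the empty structure for (A1), since finiteness follows from (B1), and the $3$-cycle with $\kappa\equiv1$ for (A2). Your other countermodels differ only in inessential choices, and all of them check out: fixed points instead of non-trivial orbits for (A3) and (A4), a zero amplitude vector for (B1), and for (B2) a one-site $B$ that violates the relation only for the given $(\sigma,\kappa)$, whereas the paper takes some $\mathbf B(z)$ not proportional to any $\mathbf A(w)$, so that (B2) fails for every $(\sigma,\kappa)$.
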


The proof consists of the following countermodels. Each one satisfies
all the conditions except the one indicated, which proves that this
condition cannot be derived from the others. In the countermodels for
(A1)--(A4) the amplitudes can be taken to be any family satisfying
(B1), with $B$ defined by (B2), so we only describe the skeleton.

\begin{countermodel}[(A1)]
\label{cm:A1empty}
$Z=\varnothing$. Conditions (A2)--(A4), (B1) and (B2) hold vacuously,
with any $K>0$. The empty structure would be the initial object
of each stratum $\SDS_K$ \cite[Remark~5.6]{PaperVI}; excluding it is a
convention, and it is the only axiom of which this can be said.
\end{countermodel}

\begin{remark}[Finiteness is a consequence of (B1)]
\label{rem:finiteness}
The countermodel for (A1) uses its non-emptiness half. The finiteness
half is \emph{not} independent: since $N$ is a positive integer,
(B1) requires $|Z|$ linearly independent vectors in $\mathbb R^N$,
hence $|Z|\le N$. Finiteness is independent only at the level of
skeletons, i.e.\ of (A2)--(A4): $Z=\mathbb Z$, $\sigma(z)=-z$,
$\kappa\equiv1$ satisfies them. For complete structures it could be
dropped from (A1) without loss; we keep it because the skeleton is
the object of the categorical theory \cite[Note~2.5]{PaperVI}, and
there it is a genuine axiom.
\end{remark}

\begin{countermodel}[(A2)]
\label{cm:A2}
$Z=\{0,1,2\}$, $\sigma=(0\,1\,2)$, $\kappa\equiv1$, $K=1$. Then
$\kappa>0$ and $\kappa(z)\kappa(\sigma z)=1$ for all $z$, but
$\sigma^2(0)=2\neq0$.
\end{countermodel}

\begin{countermodel}[(A3)]
\label{cm:A3}
$Z=\{0,1\}$, $\sigma=(0\,1)$, $\kappa(0)=-2$, $\kappa(1)=-3$, $K=6$.
Then $\sigma^2=\mathrm{id}$ and $\kappa(0)\kappa(1)=6$, but $\kappa<0$.
\end{countermodel}

\begin{countermodel}[(A4)]
\label{cm:A4}
$Z=\{0,1,2,3\}$, $\sigma=(0\,1)(2\,3)$, $\kappa=(2,3,1,1)$. Then
$\sigma^2=\mathrm{id}$ and $\kappa>0$, but the orbit products are $6$
and $1$.
\end{countermodel}

\begin{countermodel}[(B1)]
\label{cm:B1}
Any skeleton with $m\ge2$, amplitudes with
$\mathbf A(z_1)=\mathbf A(z_2)$ for two distinct sites, and $B$ defined
by (B2).
\end{countermodel}

\begin{countermodel}[(B2)]
\label{cm:B2}
Any skeleton, amplitudes satisfying (B1) with $N\ge2$, and $B$ chosen
so that some vector $\mathbf B(z)$ is not proportional to any
$\mathbf A(w)$. Under (B2), every $\mathbf B(z)$ is proportional to
$\mathbf A(\sigma z)$.
\end{countermodel}

\begin{remark}[Independent is not the same as idle]
\label{rem:notidle}
The countermodels prove that no axiom can be \emph{derived} from the
others. They say nothing about what each axiom is \emph{used for}, and
the two questions have different answers. Countermodel~\ref{cm:A2} is
instructive. Its permutation is not an involution, yet with amplitudes
satisfying (B1) and $B$ defined by (B2), the uniform distribution is
insensitive at level $C^\ast=1/2$ (Theorem~\ref{thm:master} below). The
invariance mechanism survives the failure of (A2). Determining what
(A2) does is therefore a separate question, answered in
Section~\ref{subsec:A2}. Countermodel~\ref{cm:B2}, by contrast, does
lose the invariance, and in a strong sense. Take one orbit
$\{a,b\}$ and one fixed point $c$, with $\kappa=(1,4,2)$, so that
$K=4$ and $C^\ast=\tfrac13$; take $A=I_3$, spectral functions
$f_\nu(\gamma)=(1-\gamma)/[1-\lambda_\nu(1-\gamma)]$ with
$\lambda=(\tfrac12,-\tfrac14,\tfrac14)$, and the neutral distribution
$\pi^\ast=(\tfrac12,\tfrac14,\tfrac14)$. With $B$ given by (B2),
$C(\pi^\ast,\gamma)=\tfrac13$ exactly at $\gamma=\tfrac15,\tfrac12,\tfrac45$.
If $B$ is replaced by the all-ones matrix, which is positive and not of
the form (B2), the three values become $\tfrac{13}{49}$,
$\tfrac{37}{143}$ and $\tfrac{253}{999}$: they are not constant, and
none equals $C^\ast$.
\end{remark}

\section{What each axiom does}
\label{sec:roles}

\subsection{The structure theorem}

The following theorem is the tool for the whole section. It holds
without (A2) and without (A4).

\begin{theorem}[Structure of insensitive distributions]
\label{thm:master}
Let $Z$ be finite and non-empty, $\sigma$ a bijection of $Z$, and
$\kappa:Z\to(0,\infty)$ arbitrary. Assume \textup{(B1)},
\textup{(B2)}, \textup{(H1)} and \textup{(H2)}. No distribution is
insensitive at level $c=0$ or $c=1$. Let $\pi\in\Dbar$ and
$c\in\mathbb R\setminus\{0,1\}$. Then $\pi$ is insensitive at level $c$
if and only if the following two conditions hold:
\begin{enumerate}[label=\textup{(\roman*)}]
\item every cycle $O$ of $\sigma$ that meets $\supp\pi$ is contained in
  $\supp\pi$ and satisfies $g_O=(1-c)/c$;
\item on every such cycle, $g_O\,\pi_{\sigma z}=\kappa(z)\,\pi_z$ for
  all $z\in O$.
\end{enumerate}
In particular, every insensitive level is of the form
$c=1/(1+g_O)$ for some cycle $O$. The set of distributions
insensitive at level $c$ is either empty or a simplex of dimension
$n_c-1$, where $n_c$ is the number of cycles with
$g_O=(1-c)/c$.
\end{theorem}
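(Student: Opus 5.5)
By Lemma~\ref{lem:reduction} together with (B1), which makes $T$ injective, $\pi\in\Dbar$ is insensitive at level $c$ if and only if $L_c\pi=0$. The whole proof therefore reduces to solving the linear system
\[
(1-c)\,\pi_{\sigma z}=c\,\kappa(z)\,\pi_z,\qquad z\in Z,
\]
on the closed simplex. The system decouples along the cycles of $\sigma$, since each equation only involves $z$ and $\sigma z$. I would analyse one cycle at a time and then reassemble the pieces.

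\emph{Degenerate levels.} If $c=0$ the equations read $\pi_{\sigma z}=0$ for all $z$. Since $\sigma$ is a bijection, this gives $\pi=0$, which contradicts $\sum_z\pi_z=1$. If $c=1$ they read $\kappa(z)\pi_z=0$, so by positivity of $\kappa$ again $\pi=0$.

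\emph{A single cycle, $c\notin\{0,1\}$.} Write $\lambda=(1-c)/c\neq0$, so that the equations become $\pi_{\sigma z}=\lambda^{-1}\kappa(z)\pi_z$. Fix a cycle $O=(z_0,z_1,\dots,z_{n-1})$ with $z_{j+1}=\sigma z_j$. Iterating gives
\[
\pi_{z_j}=\lambda^{-j}\Bigl(\prod_{i<j}\kappa(z_i)\Bigr)\pi_{z_0}.
\]
Since $\kappa>0$ and $\lambda\ne0$, either $\pi$ vanishes on all of $O$ or on none of it. This gives the first half of (i): a cycle meeting $\supp\pi$ is contained in it. Going once around the cycle gives $\pi_{z_0}=\lambda^{-n}g_O^{\,n}\pi_{z_0}$. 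If $\pi_{z_0}\neq0$, then $\lambda^n=g_O^{\,n}$.

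This root-selection step is the only delicate point. The entries of $\pi$ are nonnegative and nonzero on $O$, and $\kappa>0$, so the recursion forces $\lambda^{-1}\kappa(z)=\pi_{\sigma z}/\pi_z>0$. Hence $\lambda>0$, and the real positive $n$-th root gives $\lambda=g_O$. This also shows that $c=1/(1+g_O)$ lies in $(0,1)$, and rewriting the equations with $\lambda=g_O$ is exactly the balance relation (ii).

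Conversely, suppose $g_O=\lambda$. Then the recursion closes consistently around the cycle, and the solutions supported on $O$ form the ray spanned by the positive vector $v^O$ with $v^O_{z_j}=g_O^{-j}\prod_{i<j}\kappa(z_i)$.

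\emph{Reassembly.} Summing over cycles, $\pi$ is insensitive at level $c$ if and only if
\[
\pi=\sum_{O:\,g_O=\lambda}t_O\,v^O
\]
with $t_O\ge0$ and $\sum_O t_O\langle \mathbf 1,v^O\rangle=1$. The converse direction of the theorem follows because (i) and (ii) say precisely that $L_c\pi=0$ cycle by cycle. The vectors $v^O$ have disjoint supports, so after normalising them to probability vectors they are affinely independent. The solution set is therefore their convex hull, a simplex of dimension $n_c-1$, or empty when $n_c=0$. In particular, an insensitive level exists only if $\lambda=g_O$ for some cycle $O$, that is, $c=1/(1+g_O)$.
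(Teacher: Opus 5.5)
Your proof is correct and follows the paper's argument essentially step for step. You reduce to $L_c\pi=0$ via Lemma~\ref{lem:reduction} and the injectivity of $T$, propagate the balance relation along each cycle, select the positive root from $\pi_{\sigma z}/\pi_z>0$, and assemble the normalised cycle vectors into a simplex of dimension $n_c-1$; the one point you leave implicit, which the paper states, is that in the converse the balance equations hold trivially on cycles disjoint from $\supp\pi$ because both sides vanish there.
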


\begin{proof}
By (B1), $T$ is injective, so by Lemma~\ref{lem:reduction}, $\pi$ is
insensitive at level $c$ if and only if $L_c\pi=0$, that is,
\begin{equation}
(1-c)\,\pi_{\sigma z}=c\,\kappa(z)\,\pi_z\qquad\text{for every }z\in Z.
\label{eq:balance}
\end{equation}

Suppose \eqref{eq:balance} holds, and let $\pi_z>0$ for some $z$ in a
cycle $O$ of length $\ell$. If $c=1$, then \eqref{eq:balance} at $z$
gives $\kappa(z)\pi_z=0$, which is impossible. If $c=0$, it gives
$\pi_{\sigma w}=0$ for every $w$, hence $\pi=0$, which is also
impossible. So $c\notin\{0,1\}$, and \eqref{eq:balance} reads
$\pi_{\sigma z}=\frac{c}{1-c}\,\kappa(z)\,\pi_z$. Since
$\pi_{\sigma z}\ge0$, $\kappa(z)>0$ and $\pi_z>0$, we get
$c/(1-c)\ge0$, and the case $c/(1-c)=0$ was excluded, so
$c/(1-c)>0$ and $\pi_{\sigma z}>0$. Iterating along the cycle shows
that $\pi$ is positive on all of $O$. Going once around the cycle
gives
\[
\pi_z=\Bigl(\frac{c}{1-c}\Bigr)^{\ell}\prod_{w\in O}\kappa(w)\;\pi_z ,
\qquad\text{hence}\qquad
\Bigl(\frac{1-c}{c}\Bigr)^{\ell}=g_O^{\ell}.
\]
Since both $(1-c)/c$ and $g_O$ are positive, $(1-c)/c=g_O$. This is
(i). Substituting into \eqref{eq:balance} gives (ii).

Conversely, assume (i) and (ii). If $z$ lies in a cycle contained in
$\supp\pi$, then $1-c=c\,g_O$, and \eqref{eq:balance} is (ii)
multiplied by $c$. If $z$ lies in a cycle disjoint from $\supp\pi$,
both sides of \eqref{eq:balance} vanish. Hence $L_c\pi=0$.

For the last assertion, note that on a cycle with $g_O=(1-c)/c$,
relation (ii) determines $\pi|_O$ up to a positive factor (the
consistency condition around the cycle is exactly
$\prod_O\kappa=g_O^{\ell}$). So the insensitive distributions at
level $c$ are the convex combinations of the $n_c$ normalised cycle
vectors.
\end{proof}

\begin{remark}[The level of a cycle]
\label{rem:level}
Theorem~\ref{thm:master} assigns to every cycle $O$ an intrinsic
number, its \emph{level} $c_O=1/(1+g_O)$. Insensitivity is a local
phenomenon: a distribution is insensitive exactly when all the cycles
it charges sit at the same level and are internally balanced. The
universal invariant $C^\ast=1/(1+\sqrt K)$ is the level of every orbit
of a skeleton, since $g_O=\sqrt K$ under (A2)--(A4).
\end{remark}

\subsection{The constant orbit product (A4)}
\label{subsec:A4}

\begin{corollary}[(A4) is equivalent to an insensitive distribution of full support]
\label{cor:A4}
Under the hypotheses of Theorem~\ref{thm:master}, there exists an
insensitive distribution in the open simplex if and only if all
cycles of $\sigma$ have the same geometric mean $g_O$. If moreover
$\sigma$ is an involution, this happens if and only if \textup{(A4)}
holds, and then the level is $C^\ast$.
\end{corollary}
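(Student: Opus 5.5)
The plan is to read the corollary directly off Theorem~\ref{thm:master}. An insensitive distribution $\pi$ in the open simplex has $\supp\pi=Z$. So every cycle $O$ of $\sigma$ meets $\supp\pi$, and condition~(i) of the theorem forces $g_O=(1-c)/c$ for every cycle, at the common level $c$ of $\pi$. Hence all cycles have the same geometric mean. This is the forward direction; the theorem has already excluded $c\in\{0,1\}$, so nothing else needs checking.

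For the converse, suppose $g_O=g$ for every cycle, and put $c=1/(1+g)\in(0,1)$, so that $(1-c)/c=g$. On each cycle $O$ of length $\ell$ I will build a positive vector satisfying relation~(ii), $g\,\pi_{\sigma z}=\kappa(z)\pi_z$. Fix $z_0\in O$, set $\pi_{z_0}=1$, and define $\pi_{\sigma^j z_0}=g^{-j}\prod_{i<j}\kappa(\sigma^i z_0)$ for $0\le j<\ell$. The only point to verify is consistency on returning to $z_0$, which amounts to $\prod_{w\in O}\kappa(w)=g^\ell$. This holds by the definition of $g_O$, and it is exactly the remark at the end of the proof of Theorem~\ref{thm:master}. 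Each cycle vector is strictly positive. Any convex combination of the normalised cycle vectors with all coefficients positive, for instance $1/\cyc(\sigma)$ each, is then a distribution with full support. It satisfies (i) and (ii), so by the theorem it is insensitive at level $c$.

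For the involution case, the cycles are the non-trivial orbits $\{z,\sigma z\}$, with $g_O=\sqrt{\kappa(z)\kappa(\sigma z)}$, and the fixed points, with $g_O=\kappa(z)$. If (A4) holds, then $\kappa(z)\kappa(\sigma z)=K$ everywhere, so every $g_O=\sqrt K$; this was already noted in Section~\ref{sec:setting}, and at a fixed point it uses $\kappa(z)^2=K$ together with positivity. Conversely, if all $g_O$ equal some $g>0$, set $K=g^2$. Then $\kappa(z)\kappa(\sigma z)=g_O^2=K$ on non-trivial orbits, and $\kappa(z)\kappa(\sigma z)=\kappa(z)^2=g^2=K$ at fixed points, so (A4) holds. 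The level is $1/(1+g)=1/(1+\sqrt K)=C^\ast$.

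There is no real obstacle here. The only step needing care is the cycle-consistency check in the converse, and the earlier theorem already handles it. I will also state explicitly that the level $c$ is the same for every cycle, since the corollary asks for a single insensitive distribution rather than one per cycle.
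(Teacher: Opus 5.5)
Your proposal is correct and follows the paper's proof essentially step for step. The forward direction applies condition (i) of Theorem~\ref{thm:master} to an interior distribution, which charges every cycle. The converse takes a distribution balanced on every cycle with positive masses; you write the cycle vectors out explicitly, where the paper relies on the consistency remark at the end of Theorem~\ref{thm:master}. The involution case uses $g_O^2=\kappa(z)\kappa(\sigma z)$ at fixed points and at non-trivial orbits alike, giving (A4) with $K=g^2$.
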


\begin{proof}
An interior distribution charges every cycle, so by
Theorem~\ref{thm:master} all $g_O$ coincide. Conversely, if they all
equal $g$, the distribution that is balanced on each cycle with
arbitrary positive masses is insensitive at level $1/(1+g)$. If
$\sigma$ is an involution, $g_O^2=\kappa(z)\kappa(\sigma z)$ on a
non-trivial orbit and $g_O^2=\kappa(z)^2=\kappa(z)\kappa(\sigma z)$ at
a fixed point. So all $g_O$ are equal if and only if
$\kappa(z)\kappa(\sigma z)$ is constant, which is (A4) with
$K=g^2$.
\end{proof}

\begin{remark}[Every pair balances; the axiom lives between pairs]
\label{rem:pair}
Consider a single non-trivial orbit $\{z,z'\}$ with arbitrary weights
$\kappa(z),\kappa(z')>0$. By Theorem~\ref{thm:master} it is always
balanced: the distribution with $\pi_z/\pi_{z'}=
\sqrt{\kappa(z')/\kappa(z)}$ is insensitive at level
$1/(1+\sqrt{\kappa(z)\kappa(z')})$. No condition on the product is
involved. Condition (A4) is therefore not a condition on pairs. It is
the condition that all pairs, and all fixed points, \emph{share} a
level. This is what turns the separate balances of the orbits into a
single separatrix meeting the interior. Without (A4), the insensitive
distributions form simplices lying in pairwise disjoint proper faces
of the simplex, one for each value of the orbit product, and none of
them meets the interior. For Countermodel~\ref{cm:A4} each of them is
a single point: the balanced distribution on the first orbit, at level
$1/(1+\sqrt6)$, and the balanced distribution on the second, at level
$1/2$.
\end{remark}

\begin{remark}[(A4) for permutations]
For a bijection that is not an involution, ``all $g_O$ equal'' is
strictly weaker than the pointwise condition
$\kappa(z)\kappa(\sigma z)=K$. For instance, the $3$-cycle with
$\kappa=(1,2,\tfrac12)$ has $g_O=1$, so the balanced distribution
$\pi\propto(1,1,2)$ is insensitive at level $\tfrac12$, while the
products $\kappa(z)\kappa(\sigma z)$ are $2$, $1$ and $\tfrac12$. Thus
the pointwise form of (A4) is the right one precisely because $\sigma$
is an involution. Section~\ref{subsec:A2} shows that, for
non-involutive permutations, the pointwise form has further
consequences that the mechanism does not need.
\end{remark}

\subsection{Positivity (A3)}
\label{subsec:A3}

The role of (A3) appears twice. The first is the one recorded in
\cite[Remark~2.2]{PaperVI}: at a fixed point (A4)
reduces to $\kappa(z)^2=K$, and (A3) selects $\kappa(z)=+\sqrt K$. On
this rest the parametrisation $\kappa=(\sqrt K/r,\sqrt K\,r)$
\cite[Proposition~3.1]{PaperVI}, the classification
\cite[Theorem~7.2]{PaperVI} and the definition of $C^\ast$ itself.
Countermodel~\ref{cm:A3} shows that, without (A3), the same equation
admits the negative root.

The second is visible in the proof of Theorem~\ref{thm:master}. The
sign condition $c/(1-c)>0$, which forces every insensitive level into
$(0,1)$ and selects the \emph{positive} geometric mean, uses
$\kappa>0$. If $\kappa$ is negative on a cycle, the same computation
gives $(1-c)/c<0$ on it. Such a cycle can only be balanced at a level
outside $(0,1)$, never at the level of the positive cycles. In
Countermodel~\ref{cm:A3}, for example, the only candidate level for
the orbit is $c=1/(1-\sqrt6)<0$. Positivity is thus what makes ``the
level of a cycle'' a well-defined number in $(0,1)$, the same for
fixed points and for pairs.

\begin{remark}[Signed levels]
\label{rem:signed}
If signed vectors are allowed in place of distributions, the
computation in the proof of Theorem~\ref{thm:master} also admits the
root $(1-c)/c=-g_O$ on cycles of even length. If $g_O\neq1$ this is
the level $1/(1-g_O)$, with $\pi$ alternating in sign along the cycle;
if $g_O=1$ the equation $1-c=-c$ has no solution and this branch gives
no level.
Positivity of $\pi$ is what excludes it, just as positivity of
$\kappa$ excludes negative levels.
\end{remark}

\subsection{The involution (A2)}
\label{subsec:A2}

Theorem~\ref{thm:master} and Remark~\ref{rem:notidle} show that the
invariance mechanism does not need (A2). To find what (A2) does, we
keep the pointwise product condition and let $\sigma$ be an arbitrary
bijection.

\begin{proposition}[Weights on a permutation]
\label{prop:permweights}
Let $\sigma$ be a bijection of a finite set $Z$, and let $\kappa>0$
satisfy $\kappa(z)\kappa(\sigma z)=K$ for every $z$. Then
$\kappa\circ\sigma^2=\kappa$. On every cycle of odd length,
$\kappa\equiv\sqrt K$. On every cycle of even length, $\kappa$
alternates between two values $a$ and $K/a$ with $a>0$ arbitrary.
Consequently, for fixed $(Z,\sigma,K)$ the set of admissible weights is
in bijection with $(\mathbb R_{>0})^{e(\sigma)}$, and every cycle has
$g_O=\sqrt K$.
\end{proposition}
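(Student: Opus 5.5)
The argument will be entirely elementary and proceeds cycle by cycle. First, from the pointwise relation $\kappa(z)\kappa(\sigma z)=K$ applied at $z$ and at $\sigma z$ we get $\kappa(z)\kappa(\sigma z)=K=\kappa(\sigma z)\kappa(\sigma^2 z)$. Dividing by $\kappa(\sigma z)>0$, which uses (A3), gives $\kappa(\sigma^2z)=\kappa(z)$, that is, $\kappa\circ\sigma^2=\kappa$. So along any cycle $O=(z_0,z_1,\dots,z_{\ell-1})$ with $z_{i+1}=\sigma z_i$, the weight depends only on the parity of the index: $\kappa(z_i)=a$ for even $i$ and $\kappa(z_i)=K/a$ for odd $i$, where $a=\kappa(z_0)$.

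Next, split by the parity of $\ell$. If $\ell$ is odd, then $\sigma^2$ generates the same cyclic group on $O$ as $\sigma$, since $2$ is invertible mod $\ell$. Hence $\kappa$ is constant on $O$, say equal to $a$, and the relation gives $a^2=K$. Positivity selects $a=\sqrt K$; fixed points are the case $\ell=1$. If $\ell$ is even, the parity pattern is consistent around the cycle, because $z_{\ell}=z_0$ has even index. Any $a>0$ then works: one checks $\kappa(z_i)\kappa(z_{i+1})=a\cdot K/a=K$ for all $i$, including the wrap-around pair $(z_{\ell-1},z_0)$. Conversely, every admissible weight on $O$ has this form.

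For the parametrisation, choose a base point in each even cycle and send an admissible $\kappa$ to the tuple of its values $a_O$ at these base points. By the two cases this map is well defined and injective, since $\kappa$ is determined by the $a_O$ on even cycles and forced to equal $\sqrt K$ on odd ones. It is surjective, since the even-cycle construction is admissible for every choice of $a_O>0$. This gives the bijection with $(\mathbb R_{>0})^{e(\sigma)}$; it depends on the choice of base points, which is harmless. Finally, $g_O$ is computed directly. On an odd cycle all factors equal $\sqrt K$. On an even cycle the product is $a^{\ell/2}(K/a)^{\ell/2}=K^{\ell/2}$, whose $\ell$-th root is $\sqrt K$.

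No step is a genuine obstacle. The only points needing care are the parity argument for odd cycles, where $\gcd(2,\ell)=1$ means the $\sigma^2$-orbit of $z_0$ is all of $O$, and the explicit use of positivity, both in the division step and in choosing the root $+\sqrt K$.
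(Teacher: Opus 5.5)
Your proof is correct and follows essentially the paper's argument. You derive $\kappa\circ\sigma^2=\kappa$ from the pointwise relation, read off the alternating pattern $a,K/a,a,\dots$ along a cycle, and close the cycle, which forces $a=\sqrt K$ for odd length and leaves $a>0$ free for even length; your $\gcd(2,\ell)=1$ argument for odd cycles and your base-point construction of the bijection with $(\mathbb R_{>0})^{e(\sigma)}$ simply spell out steps the paper states in one line.
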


\begin{proof}
We have $\kappa(\sigma^2z)=K/\kappa(\sigma z)=\kappa(z)$. On a
cycle $z_0$, $z_1=\sigma z_0$, \dots, the values are therefore
$a$, $K/a$, $a$, $K/a$, \dots, with $a=\kappa(z_0)$. If the length is odd, closing
the cycle forces $a=K/a$, so $a=\sqrt K$ by positivity. If it is even,
the cycle closes for every $a>0$. The geometric mean is $\sqrt K$ in
both cases.
\end{proof}

For an involution, $e(\sigma)=k$ and the proposition reduces to
\cite[Proposition~3.1]{PaperVI}: one free parameter $r_O$ for each
non-trivial orbit. For a general permutation, the free parameters are
the even cycles, while odd cycles of length at least $3$ are rigid,
like fixed points.

\begin{theorem}[Separatrix and response rank for a permutation]
\label{thm:perm}
Let $\sigma$ be a bijection of a finite set $Z$ with $|Z|=m$, let
$\kappa>0$ satisfy $\kappa(z)\kappa(\sigma z)=K$, and assume
\textup{(B1)}, \textup{(B2)}, \textup{(H1)}, \textup{(H2)}. Then:
\begin{enumerate}[label=\textup{(\alph*)}]
\item the insensitive distributions are exactly the elements of
  $\Sbar_\sigma=\{\pi\in\Dbar:\sqrt K\,\pi_{\sigma z}=\kappa(z)\pi_z\}$,
  all at level $C^\ast$, and $\dim\Sbar_\sigma=\cyc(\sigma)-1$;
\item the response rank is $\rresp=m-\cyc(\sigma)$, the codimension of
  $\Sbar_\sigma$ in the simplex.
\end{enumerate}
\end{theorem}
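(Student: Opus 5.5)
Part (a) follows directly from Theorem~\ref{thm:master} combined with Proposition~\ref{prop:permweights}. By the proposition every cycle has $g_O=\sqrt K$, so every cycle sits at the single level $c=1/(1+\sqrt K)=C^\ast$, and $(1-c)/c=\sqrt K$. Hence no distribution is insensitive at any other level, and condition (i) of the structure theorem holds automatically for any support that is a union of cycles. Condition (ii) reads $\sqrt K\,\pi_{\sigma z}=\kappa(z)\pi_z$ on charged cycles. On uncharged cycles this relation holds trivially, and conversely it forces the support to be a union of cycles, since $\kappa>0$. So the insensitive set is exactly $\Sbar_\sigma$. The dimension count $n_c-1$ with $n_c=\cyc(\sigma)$ gives $\dim\Sbar_\sigma=\cyc(\sigma)-1$.

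For (b), I compute the span of $\psi(\gamma)$ directly. Since the denominator $\langle\pi^\ast,s\rangle>0$ is a scalar, the span equals that of $\{u(\gamma)-C^\ast s(\gamma)\}_\gamma$. Write $\mathbf f(\gamma)=(f_\nu(\gamma))_\nu$. By \eqref{eq:us} and (B2), the $z$-component is
\[
(1-C^\ast)\langle\mathbf A(z),\mathbf f\rangle-C^\ast\kappa(z)\langle\mathbf A(\sigma z),\mathbf f\rangle .
\]
Define the matrix $M:\mathbb R^N\to\mathbb R^Z$ by $(Mv)_z=(1-C^\ast)\langle\mathbf A(z),v\rangle-C^\ast\kappa(z)\langle\mathbf A(\sigma z),v\rangle$. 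Then $u-C^\ast s=M\mathbf f(\gamma)$. By (H1), the vectors $\mathbf f(\gamma)$ span $\mathbb R^N$: otherwise a nonzero $w$ would satisfy $\sum w_\nu f_\nu\equiv0$. Hence $\rresp=\rank M$.

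Next I identify $M$ with an adjoint. Comparing with the proof of Lemma~\ref{lem:reduction}, $\langle x,Mv\rangle=\langle TL x,v\rangle$, where $L=L_{C^\ast}$. So $M=(TL)^{\top}$ and $\rank M=\rank(TL)$. By (B1), $T$ is injective, so $\rank(TL)=\rank L=m-\dim\ker L$. It remains to compute $\ker L$ on all of $\mathbb R^Z$, not only on distributions. On each cycle, the equation $(1-C^\ast)x_{\sigma z}=C^\ast\kappa(z)x_z$ is a recursion $x_{\sigma z}=\kappa(z)x_z/\sqrt K$. Its consistency condition around the cycle is $\prod_O\kappa=\sqrt K^{|O|}$, which holds since $g_O=\sqrt K$. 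So each cycle contributes exactly a one-dimensional kernel, giving $\dim\ker L=\cyc(\sigma)$ and $\rresp=m-\cyc(\sigma)$. This is the codimension of $\Sbar_\sigma$, since the simplex has dimension $m-1$.

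The main obstacle is this linear-algebra identification in (b). Two points need care. The first is the transpose bookkeeping between $M$ and $TL$: the convention in \eqref{eq:Lc} indexes by $\sigma z$, so I will verify $\langle x,Mv\rangle=\langle TLx,v\rangle$ explicitly by re-indexing $w=\sigma z$, as in the lemma. The second is checking that the response rank does not depend on the choice of interior $\pi^\ast$, which holds because the denominator is a positive scalar. The kernel computation on signed vectors is routine once the cycle consistency is in hand. No alternating-sign branch arises as in Remark~\ref{rem:signed}, because the level is fixed at $C^\ast$ and the recursion is linear of first order.
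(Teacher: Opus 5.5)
Your proof is correct and follows the paper's argument essentially verbatim. Part (a) is Theorem~\ref{thm:master} with all $g_O=\sqrt K$ from Proposition~\ref{prop:permweights}. In part (b), your matrix $M$ is the transpose of the paper's $W=TL_{C^\ast}$, with $\rresp=\rank L_{C^\ast}$ by (H1) and (B1), and $\dim\ker L_{C^\ast}=\cyc(\sigma)$ by cycle consistency; just rename your $M$, since the paper reserves that letter for the $N\times k$ matrix of Section~\ref{subsec:B1}.
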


\begin{proof}
(a) By Proposition~\ref{prop:permweights}, $g_O=\sqrt K$ for every
cycle, so Theorem~\ref{thm:master} applies with the single level
$(1-c)/c=\sqrt K$, and condition (ii) is the defining relation of
$\Sbar_\sigma$. There are $\cyc(\sigma)$ cycles, each contributing one
direction.

(b) Write $v(\gamma)=u(\gamma)-C^\ast s(\gamma)$, which is a positive
multiple of $\psi(\gamma)$. By (B2),
$v_z=\sum_\nu f_\nu W_{\nu z}$ with
$W=TR$, where $R:\mathbb R^Z\to\mathbb R^Z$ is
$Rx=\sum_z x_z[(1-C^\ast)e_z-C^\ast\kappa(z)e_{\sigma z}]$. By (H1),
exactly as in \cite[proof of Theorem~8.8]{PaperVI}, the vectors
$f(\gamma)$ span $\mathbb R^N$, so
$\operatorname{span}\{v(\gamma)\}=\operatorname{im}W^{\mathsf T}$ and
$\rresp=\rank W$. By (B1), $\rank W=\rank R$. Finally, $R=L_{C^\ast}$, and
$R x=0$ if and only if $\sqrt K\,x_{\sigma z}=\kappa(z)x_z$ for all
$z$. This system has a one-dimensional solution space on each cycle,
by the consistency $\prod_O\kappa=\sqrt K^{\,|O|}$. Hence
$\dim\ker R=\cyc(\sigma)$ and $\rresp=m-\cyc(\sigma)$.
\end{proof}

\begin{corollary}[What the involution does]
\label{cor:A2}
Under the hypotheses of Theorem~\ref{thm:perm}, the following are
equivalent:
\begin{enumerate}[label=\textup{(\roman*)}]
\item $\sigma$ is an involution;
\item $\rresp=e(\sigma)$, that is, the response rank equals the number
  of free weight parameters of Proposition~\ref{prop:permweights};
\item every cycle of $\sigma$ contributes at most one direction of
  response.
\end{enumerate}
\end{corollary}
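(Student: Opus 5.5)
We reduce everything to the count of Theorem~\ref{thm:perm}(b), $\rresp=m-\cyc(\sigma)$, and to a cycle-by-cycle comparison. Write the cycles of $\sigma$ as $O_1,\dots,O_{\cyc(\sigma)}$ with lengths $\ell_1,\dots,\ell_{\cyc(\sigma)}$. Then $m=\sum_j\ell_j$, so
\[
\rresp=\sum_j(\ell_j-1).
\]
Likewise, by Proposition~\ref{prop:permweights}, $e(\sigma)=\sum_j\epsilon_j$, where $\epsilon_j=1$ if $\ell_j$ is even and $\epsilon_j=0$ otherwise. The key observation is the pointwise inequality $\ell_j-1\ge\epsilon_j$ for every cycle. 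It holds with equality exactly when $\ell_j\in\{1,2\}$: a fixed point gives $0=0$, a transposition gives $1=1$, and any cycle of length $\ell\ge3$ gives $\ell-1\ge2>1\ge\epsilon$. We interpret $\ell_j-1$ as the number of response directions contributed by $O_j$. This is the rank of the restriction of $R=L_{C^\ast}$ to $\mathbb R^{O_j}$, which is block-diagonal along cycles with one-dimensional kernel on each block, as in the proof of Theorem~\ref{thm:perm}.

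With this in hand, the three equivalences follow in order.
\begin{itemize}
\item[(i)$\Rightarrow$(ii).] If $\sigma$ is an involution, all $\ell_j\le2$, so equality holds termwise and $\rresp=e(\sigma)$. This is also the statement $e(\sigma)=k$ together with $m-\cyc(\sigma)=(2k+f)-(k+f)=k$.
\item[(ii)$\Rightarrow$(i).] Since $\sum_j(\ell_j-1-\epsilon_j)=0$ is a sum of nonnegative terms, every term vanishes. Hence every $\ell_j\le2$, and $\sigma^2=\mathrm{id}$.
\item[(i)$\Leftrightarrow$(iii).] A cycle contributes $\ell_j-1$ directions, and $\ell_j-1\le1$ if and only if $\ell_j\le2$. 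So (iii) holds exactly when all cycles have length at most $2$, which is (i).
\end{itemize}

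The only point needing care is making ``contributes'' in (iii) precise. I would state it as $\operatorname{rank}(R|_{\mathbb R^{O}})=|O|-1$, and justify it from the block decomposition of $R$. Each $e_z$ with $z\in O$ is mapped into $\mathbb R^{O}$, because $e_{\sigma z}$ stays in the cycle. So $R$ preserves $\mathbb R^{O}$, and its kernel on that block is one-dimensional by the consistency $\prod_O\kappa=\sqrt K^{|O|}$ from Proposition~\ref{prop:permweights}. Since $\rresp=\rank R$ under (B1) and (H1), the total rank is the sum of the block ranks. Beyond this the argument is elementary counting, and I expect no real obstacle.
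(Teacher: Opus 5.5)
Your proof is correct and is essentially the paper's argument. Both write $\rresp=m-\cyc(\sigma)=\sum_O(|O|-1)$ and $e(\sigma)=\sum_O[\,|O|\text{ even}\,]$, note that the termwise difference is nonnegative and vanishes exactly when $|O|\le2$, and read off all three equivalences. Your block-diagonal justification of ``contributes'' makes precise a point the paper leaves informal: $R=L_{C^\ast}$ preserves each $\mathbb R^{O}$, hence so does $R^{\mathsf T}$, whose image is the actual response span under (B1) and (H1).
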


\begin{proof}
A cycle of length $\ell$ contributes $\ell-1$ to
$\rresp=m-\cyc(\sigma)=\sum_O(|O|-1)$, and $1$ or $0$ to $e(\sigma)$
according as $\ell$ is even or odd. The difference
$\ell-1-[\ell\text{ even}]$ is $0$ for $\ell\in\{1,2\}$, equals
$2$ for $\ell=3$, and is at least $2$ for $\ell\ge4$. It is never
negative. So $\rresp=e(\sigma)$ if and only if every cycle has length
at most $2$, which is (i), and also (iii).
\end{proof}

\begin{remark}[Reading]
Corollary~\ref{cor:A2} identifies the work of the involution.
Condition (A2) is not needed for $C^\ast$ to exist, nor for it to be
universal. What it guarantees is that the response is
\emph{orbit-local}, with one direction for each orbit, and that it is
matched one-to-one with the moduli of the weight: each non-trivial
orbit carries exactly one deformation parameter $r_O$ and exactly one
direction of response. For a $3$-cycle, by contrast, the weight is
rigid but the response has rank $2$. For a $4$-cycle there is one
parameter and a response of rank $3$. The statement $\rresp=k$ of
\cite[Corollary~8.9]{PaperVI} is, in this light, the involutive case of
$\rresp=m-\cyc(\sigma)$. The twisted antisymmetry
$v_{\sigma z}=-r_zv_z$ of \cite[Lemma~8.6]{PaperVI}, which makes the
response odd under $\sigma$, is the form this matching takes when
$\sigma^2=\mathrm{id}$.
\end{remark}

\subsection{The full-rank condition (B1)}
\label{subsec:B1}

From now on, (A1)--(A4) are in force. Fix a representative $z_i$ in
each non-trivial orbit, $i=1,\dots,k$, and define the $N\times k$
matrix
\[
M_{\nu i}=\sqrt K\,A_\nu(z_i)-\kappa(z_i)\,A_\nu(\sigma z_i),
\]
with columns $c_i=\sqrt K\,\mathbf A(z_i)-\kappa(z_i)\mathbf A(\sigma z_i)$.

\begin{theorem}[The response rank is the rank of $M$]
\label{thm:rankM}
Assume \textup{(A1)--(A4)}, \textup{(B2)} and \textup{(H1)},
\textup{(H2)}, but not \textup{(B1)}. Then $\rresp=\rank M$. In
particular $\rresp\le k$ unconditionally, with equality if the $2k$
vectors $\mathbf A(z)$, $z$ in non-trivial orbits, are linearly
independent; a fortiori under \textup{(B1)}.
\end{theorem}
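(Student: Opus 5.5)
The plan is to repeat the proof of Theorem~\ref{thm:perm}(b), but to keep track of $T$ instead of discarding it with (B1). Under (A1)--(A4) the separatrix meets the open simplex. For instance, take $\pi^\ast$ proportional to $1$ at fixed points and to $(\sqrt K,\kappa(z_i))$ on $(z_i,\sigma z_i)$. So $\psi$ is well defined, and by Lemma~8.3 of \cite{PaperVI} (which does not use (B1)) $\langle\pi^\ast,s\rangle>0$ by (H2). Hence $\psi(\gamma)$ is a positive multiple of $v(\gamma)=u(\gamma)-C^\ast s(\gamma)$. By (B2), $v_z(\gamma)=\sum_\nu f_\nu(\gamma)W_{\nu z}$ with $W=TR$ and $R=L_{C^\ast}$, exactly as in Theorem~\ref{thm:perm}. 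Note that the permutation version of the formula is used there, and (B1) was not needed for this step.

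By (H1) the vectors $f(\gamma)=(f_\nu(\gamma))_\nu$ span $\mathbb R^N$. Otherwise a nonzero vector orthogonal to all of them would give a linear relation among the $f_\nu$. Therefore $\operatorname{span}\{v(\gamma)\}=\operatorname{im}W^{\mathsf T}$, and
\[
\rresp=\rank W=\dim T(\operatorname{im}R).
\]
This is the only place where the argument departs from Theorem~\ref{thm:perm}. We can no longer replace $\rank TR$ by $\rank R$, so we compute $\operatorname{im}R$ explicitly.

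Up to the positive factor $1+\sqrt K$ (using $1-C^\ast=\sqrt K\,C^\ast$), the column $Re_z$ equals $\sqrt K\,e_z-\kappa(z)e_{\sigma z}$.
\begin{itemize}
\item At a fixed point, $\kappa(z)=\sqrt K$ and the column vanishes.
\item On a non-trivial orbit, use (A4) in the form $\kappa(z_i)\kappa(\sigma z_i)=K$. A one-line computation shows that the column at $\sigma z_i$, multiplied by $-\kappa(z_i)/\sqrt K$, equals the column at $z_i$.
\end{itemize}
Hence $\operatorname{im}R=\operatorname{span}\{\sqrt K\,e_{z_i}-\kappa(z_i)e_{\sigma z_i}:i=1,\dots,k\}$. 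Applying $T$ sends these generators exactly to the columns $c_i$ of $M$. So $T(\operatorname{im}R)$ is the column space of $M$, and $\rresp=\rank M$. In particular $\rresp\le k$, since $M$ has $k$ columns.

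For the equality clause, suppose the $2k$ vectors $\mathbf A(z)$, $z$ in non-trivial orbits, are independent. Each $c_i$ involves only $\mathbf A(z_i)$ and $\mathbf A(\sigma z_i)$, and has coefficient $\sqrt K\neq0$ on $\mathbf A(z_i)$. The orbits are disjoint, so a vanishing combination $\sum_i t_ic_i=0$ forces every $t_i=0$. Thus $\rank M=k$, and (B1) implies this hypothesis.

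The only delicate point I expect is the first step. One has to make sure that nothing in the definition of $\rresp$ or in the identity $\operatorname{span}\{v(\gamma)\}=\operatorname{im}W^{\mathsf T}$ silently used (B1). It did not: only (H1), (H2), (B2) and the existence of an interior neutral distribution, which comes from (A1)--(A4) alone. The rest is bookkeeping.
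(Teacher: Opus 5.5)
Your proof is correct and is essentially the argument the paper relies on: the $W=TR$ computation of Theorem~\ref{thm:perm}(b), which the paper itself attributes to \cite[proof of Theorem~8.8]{PaperVI}, with the step that uses (B1) replaced by an explicit description of $\operatorname{im}R$; your column relation $Re_{\sigma z_i}=-r_{z_i}Re_{z_i}$ is the twisted antisymmetry $v_{\sigma z}=-r_zv_z$ of \cite[Lemma~8.6]{PaperVI} in matrix form. The equality clause, which uses only the independence of the $2k$ amplitude vectors at non-fixed sites, is the same as the paper's remark on \cite[Corollary~8.9]{PaperVI}.
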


This is \cite[Theorem~8.8]{PaperVI}; we recall it because it is the
first half of Theorem~\ref{thm:levelset}. The second statement is
\cite[Corollary~8.9]{PaperVI}, whose proof only uses the independence
of the $2k$ vectors $\mathbf A(z)$ at non-fixed sites. The full-rank
condition enters only to evaluate $\rank M$, and the amplitudes at
fixed points play no role at all. The next theorem identifies the
second meaning of $\rank M$.

\begin{theorem}[The $C^\ast$-level set]
\label{thm:levelset}
Assume \textup{(A1)--(A4)}, \textup{(B2)}, \textup{(H1)},
\textup{(H2)}. Then the set of distributions insensitive at level
$C^\ast$ contains $\Sbar$, and it equals $\Sbar$ if and only if
$\rank M=k$. Consequently, the following are equivalent:
\[
\rresp=k
\quad\Longleftrightarrow\quad
\rank M=k
\quad\Longleftrightarrow\quad
\{\pi\in\Dbar:\ C(\pi,\cdot)\equiv C^\ast\}=\Sbar .
\]
\end{theorem}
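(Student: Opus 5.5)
The plan is to run the reduction of Lemma~\ref{lem:reduction} at the single level $c=C^\ast$, but without (B1). Then $T$ need not be injective, so we cannot cancel it. Instead we compute $TL_{C^\ast}\pi$ explicitly and show that it factors through $M$. Write $c=C^\ast$ and note that $1-c=c\sqrt K$. Then \eqref{eq:Lc} gives
\[
TL_c\pi=c\sum_{z}\bigl(\sqrt K\,\pi_{\sigma z}-\kappa(z)\pi_z\bigr)\mathbf A(\sigma z).
\]
We split the sum by orbits of the involution. At a fixed point, (A3)--(A4) give $\kappa(z)=\sqrt K$, so the coefficient vanishes. On a non-trivial orbit $\{z_i,\sigma z_i\}$, put $d_i=\sqrt K\,\pi_{\sigma z_i}-\kappa(z_i)\pi_{z_i}$. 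Using $\kappa(\sigma z_i)=K/\kappa(z_i)$, the coefficient attached to $z=\sigma z_i$ equals $-(\sqrt K/\kappa(z_i))\,d_i$. The two terms of the orbit therefore combine to $-(d_i/\kappa(z_i))\,c_i$. Hence
\[
TL_{C^\ast}\pi=My,\qquad y_i=-C^\ast d_i/\kappa(z_i).
\]
This identity is the computational core of the proof. It is routine, but it is the step where the orientation conventions in $L_c$ and in $M$ must match, so I would check it carefully.

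Next we read off the inclusion. The defining relations of $\Sbar$ are automatic at fixed points. On a non-trivial orbit, the relation at $\sigma z_i$ is a multiple of the relation at $z_i$, again by $\kappa(z_i)\kappa(\sigma z_i)=K$. So $\pi\in\Sbar$ if and only if $d=0$, that is, $y=0$. By Lemma~\ref{lem:reduction}, $\pi$ is insensitive at level $C^\ast$ if and only if $My=0$. This contains the case $y=0$, so the level set contains $\Sbar$. If $\rank M=k$, then $My=0$ forces $y=0$, and the level set equals $\Sbar$.

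For the converse, suppose $\rank M<k$ and pick $y\ne0$ with $My=0$. We need a distribution realising a nonzero multiple of this $y$. Start from an interior neutral $\pi^\ast$, which exists by (A1) and Corollary~\ref{cor:A4}. For each $i$, perturb by $+t_i$ at $z_i$ and $-t_i$ at $\sigma z_i$. This preserves the total mass, leaves every $d_j$ with $j\ne i$ unchanged, and changes $d_i$ by $-t_i(\sqrt K+\kappa(z_i))\neq0$. Solving for the $t_i$ realises $d=\epsilon\,(-\kappa(z_i)y_i/C^\ast)_i$ for small $\epsilon>0$, with the perturbed vector still in the open simplex. It is insensitive at level $C^\ast$ but not neutral, so the level set strictly contains $\Sbar$.

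Finally, the chain of equivalences follows by combining this equivalence with Theorem~\ref{thm:rankM}. That theorem gives $\rresp=\rank M$ under exactly these hypotheses, so $\rresp=k$ if and only if $\rank M=k$.
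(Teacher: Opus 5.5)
Your proof is correct and follows essentially the paper's route: you factor $TL_{C^\ast}\pi$ through $M$ applied to an orbit-wise defect vector, and your $y$ is exactly the paper's $\tfrac{C^\ast}{\sqrt K}\,\eta(\pi)$. You then perturb an interior neutral distribution along a nonzero kernel vector of $M$. The only difference is cosmetic: your $\pm t_i$ perturbation preserves total mass, so you avoid the normalisation step the paper uses after its one-sided perturbation.
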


\begin{proof}
For $\pi\in\Dbar$ and each orbit representative put
$\eta_i(\pi)=\sqrt K\,\pi_{z_i}-\kappa(\sigma z_i)\,\pi_{\sigma z_i}$.
Then $\pi\in\Sbar$ if and only if $\eta(\pi)=0$. Indeed, the condition
of Definition~\ref{def:insensitive} at $z=\sigma z_i$ is $\eta_i=0$,
at $z=z_i$ it is equivalent to it by (A4), and at fixed points it is
automatic. We compute $L_{C^\ast}\pi$ from \eqref{eq:Lc}. At a fixed
point it vanishes, since $1-C^\ast=C^\ast\sqrt K$. On an orbit,
$(L_{C^\ast}\pi)_{z_i}=C^\ast\eta_i$ and, using
$\kappa(z_i)=K/\kappa(\sigma z_i)$ and
$\kappa(\sigma z_i)=\sqrt K\,r_{z_i}$,
\[
(L_{C^\ast}\pi)_{\sigma z_i}
=C^\ast\bigl[\sqrt K\,\pi_{\sigma z_i}-\kappa(z_i)\pi_{z_i}\bigr]
=-\frac{C^\ast}{r_{z_i}}\,\eta_i .
\]
Therefore
\[
T L_{C^\ast}\pi
=C^\ast\sum_i\eta_i\Bigl[\mathbf A(z_i)-\tfrac1{r_{z_i}}\mathbf
A(\sigma z_i)\Bigr]
=\frac{C^\ast}{\sqrt K}\sum_i\eta_i\,c_i
=\frac{C^\ast}{\sqrt K}\,M\eta(\pi),
\]
because $\kappa(z_i)/\sqrt K=1/r_{z_i}$. By
Lemma~\ref{lem:reduction}, $\pi$ is insensitive at level $C^\ast$ if
and only if $M\eta(\pi)=0$.

If $\rank M=k$, this forces $\eta(\pi)=0$, that is $\pi\in\Sbar$.
If $\rank M<k$, take $0\neq\eta\in\ker M$ and an interior neutral
(which exists by Corollary~\ref{cor:A4} or \cite[Definition~8.2]{PaperVI})
$\pi^0$. Let $x\in\mathbb R^Z$ have $x_{z_i}=\eta_i/\sqrt K$ and zeros
elsewhere, so that $\eta(\pi^0+\varepsilon x)=\varepsilon\eta$. For
$\varepsilon>0$ small, $\pi^0+\varepsilon x$ is positive, and after
normalisation it is a distribution with $\eta\neq0$ in $\ker M$. It
is insensitive at level $C^\ast$ but not neutral. The first
equivalence in the display is Theorem~\ref{thm:rankM}.
\end{proof}

\begin{example}[Maximal response rank without full rank]
\label{ex:B1}
Take one orbit $\{a,b\}$ with $r\neq1$, no fixed points, positive
amplitudes with $\mathbf A(a)=\mathbf A(b)$, and $B$ given by (B2).
Then (B1) fails, but $c_1=(\sqrt K-\kappa(a))\mathbf A(a)\neq0$, so
$\rank M=k=1$. By Theorems~\ref{thm:rankM} and~\ref{thm:levelset},
$\rresp=1$ and the $C^\ast$-level set is exactly $\Sbar$. Nevertheless
$u_a=u_b$, so
\[
C(\pi,\gamma)=\frac{1}{1+\pi_a\kappa(a)+\pi_b\kappa(b)}
\]
is independent of $\gamma$ for \emph{every} $\pi$. Every distribution
is insensitive, at a level that varies with $\pi$ and equals
$C^\ast$ exactly on $\Sbar$.
\end{example}

\begin{remark}[The two layers of (B1)]
\label{rem:B1layers}
The full-rank condition therefore splits into two layers:
\begin{itemize}[nosep]
\item The condition $\rank M=k$, weaker than (B1), secures both the
  maximal response rank and the identification of $\Sbar$ with the
  $C^\ast$-level set (Theorem~\ref{thm:levelset}).
\item Full rank secures, in addition and under (A1)--(A4), that no
  distribution is insensitive at any level other than $C^\ast$
  (Theorem~\ref{thm:master}, since all cycles then have $g_O=\sqrt
  K$).
  Example~\ref{ex:B1} shows that this second layer is not implied by
  the first.
\end{itemize}
This settles the status of (B1). It is independent, and it is
stronger than what the rank theorem uses. It is, however, not stronger
than what the characterisation of neutrality uses: under (A1)--(A4),
by Theorem~\ref{thm:master}, it excludes insensitivity at every level
other than $C^\ast$, which $\rank M=k$ alone does not
(Example~\ref{ex:B1}).
\end{remark}

\subsection{The duality (B2)}
\label{subsec:B2}

Without (B2), insensitivity of a given distribution is cheap.

\begin{proposition}[Neutrality without duality]
\label{prop:cheap}
Let $N\ge2$, $m\ge2$, let $A_\nu>0$, let $\pi$ be an interior
distribution and $\beta>0$. Then there exist positive functions
$B_\nu$ such that $C(\pi,\gamma)=1/(1+\beta)$ for every $\gamma$ at
which $\langle\pi,u(\gamma)\rangle\neq0$ (for every $\gamma$ if
$f_\nu>0$), and
such that no vector $\mathbf B(z)$ is proportional to any $\mathbf
A(w)$. In particular, \textup{(B2)} fails for every choice of
$(\sigma,\kappa)$.
\end{proposition}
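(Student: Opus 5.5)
The plan is to reduce insensitivity to a linear condition on the vectors $\mathbf B(z)$, exactly as in Lemma~\ref{lem:reduction}, but with $\mathbf B$ as the unknown. Write $\mathbf a=\sum_z\pi_z\mathbf A(z)\in\mathbb R^N$ and $\mathbf b=\sum_z\pi_z\mathbf B(z)$. Then $\langle\pi,u(\gamma)\rangle=\sum_\nu f_\nu(\gamma)a_\nu$ and $\langle\pi,s(\gamma)\rangle=\sum_\nu f_\nu(\gamma)(a_\nu+b_\nu)$. If $\mathbf b=\beta\mathbf a$, then $\langle\pi,s\rangle=(1+\beta)\langle\pi,u\rangle$, so $C(\pi,\gamma)=1/(1+\beta)$ wherever $\langle\pi,u\rangle\neq0$. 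When all $f_\nu>0$, positivity of $A_\nu$ and $\pi$ gives $\langle\pi,u\rangle>0$ for every $\gamma$. So the only requirement is the single vector equation $\sum_z\pi_z\mathbf B(z)=\beta\mathbf a$. This needs no duality and no condition on $(\sigma,\kappa)$, which is the sense in which insensitivity is cheap.

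Next, construct the $\mathbf B(z)$. Since $\pi$ is interior and $\mathbf a$ has positive entries, the naive choice $\mathbf B(z)=\beta\mathbf a$ is positive and solves the equation. I will perturb it to break proportionality. Fix a nonzero $\mathbf d\in\mathbb R^N$ and two distinct sites $z_1,z_2$; this is where $m\ge2$ is used. Set
\[
\mathbf B(z_1)=\beta\mathbf a+\frac{\varepsilon}{\pi_{z_1}}\mathbf d,\qquad
\mathbf B(z_2)=\beta\mathbf a-\frac{\varepsilon}{\pi_{z_2}}\mathbf d .
\]
This keeps $\sum_z\pi_z\mathbf B(z)=\beta\mathbf a$. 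More generally, add perturbations $\varepsilon\mathbf d_z$ at every site with $\sum_z\pi_z\mathbf d_z=0$. For $\varepsilon$ small, every $\mathbf B(z)$ stays strictly positive.

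It remains to ensure that no $\mathbf B(z)$ is proportional to any $\mathbf A(w)$. The set of vectors proportional to some $\mathbf A(w)$ is a finite union $\bigcup_w\mathbb R\mathbf A(w)$ of lines in $\mathbb R^N$. Since $N\ge2$, this union is a proper closed set with empty interior. For each $z$, the admissible $\mathbf B(z)$ are $\beta\mathbf a+\varepsilon\mathbf d_z$, where the family $(\mathbf d_z)_z$ ranges over the linear space $\{\sum_z\pi_z\mathbf d_z=0\}$. This space has dimension $N(m-1)$, and since $m\ge2$ each coordinate projection onto $\mathbf d_z$ is surjective onto $\mathbb R^N$. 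A generic choice of $(\mathbf d_z)$ therefore avoids the finitely many bad lines for every $z$ simultaneously: the bad set is a finite union of proper subspaces, whose complement is dense. Small $\varepsilon$ then preserves positivity while keeping each $\mathbf B(z)$ off every line, since each line avoided at one $\varepsilon$ is avoided for all but at most one value of $\varepsilon$, because $\beta\mathbf a+\varepsilon\mathbf d_z$ meets a line through the origin in at most one point unless $\mathbf d_z\parallel\mathbf a$. Finally, (B2) would force $\mathbf B(z)=\kappa(z)\mathbf A(\sigma z)$, so it fails for every $(\sigma,\kappa)$.

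The main obstacle is this last genericity step. In particular, we need care when $\beta\mathbf a$ itself lies on some line $\mathbb R\mathbf A(w)$, for example if some $\mathbf A(w)\parallel\mathbf a$. In that case the perturbation direction $\mathbf d_z$ must be chosen not parallel to $\mathbf A(w)$, which is possible since $N\ge2$. I will handle this by choosing each $\mathbf d_z$ outside all the lines $\mathbb R\mathbf A(w)$ and outside $\mathbb R\mathbf a$, then discarding the finitely many bad $\varepsilon$.
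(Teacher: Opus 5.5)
Your proof is correct and is essentially the paper's. The paper also parametrises the solutions of the single constraint $\sum_z\pi_z\mathbf B(z)=\beta\sum_z\pi_z\mathbf A(z)$ as an affine translate of $\mathcal D=\{D:\sum_z\pi_zD(z)=0\}$; it writes $\mathbf B=\beta\mathbf A+D$, whereas you use the base point $\beta\mathbf a$ at every site. Both arguments then use $m\ge2$ (surjectivity of $D\mapsto D(z)$) and $N\ge2$ to make every bad set proper. The paper concludes by taking a small $D$ off a finite union of proper affine subspaces; your one-parameter version in the last paragraph (fix $\mathbf d_z\notin\mathbb R\mathbf a$, so that $\varepsilon\mapsto\beta\mathbf a+\varepsilon\mathbf d_z$ meets each line $\mathbb R\mathbf A(w)$ at most once, then discard finitely many $\varepsilon$) is an equally valid finish. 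One small correction: at fixed $\varepsilon$ the bad sets of your third paragraph are proper \emph{affine} subspaces, not linear ones. The argument there goes through once read that way, and your final version does not need it.
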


\begin{proof}
Let $\mathcal D=\{D:Z\to\mathbb R^N:\ \sum_z\pi_zD(z)=0\}$, and put
$\mathbf B(z)=\beta\mathbf A(z)+D(z)$ with $D\in\mathcal D$. Then
$\langle\pi,s\rangle=(1+\beta)\langle\pi,u\rangle+\sum_\nu f_\nu
\sum_z\pi_zD_\nu(z)=(1+\beta)\langle\pi,u\rangle$, which gives the
stated value of $C(\pi,\gamma)$. Since $m\ge2$, the projection
$D\mapsto D(z)$ maps $\mathcal D$ onto $\mathbb R^N$ for each $z$.
For fixed $z,w$, the condition
$\mathbf B(z)\in\mathbb R\mathbf A(w)$ reads
$D(z)\in\mathbb R\mathbf A(w)-\beta\mathbf A(z)$, an affine line in
$\mathbb R^N$. Its preimage under the surjective linear map
$D\mapsto D(z)$ is an affine subspace of $\mathcal D$, proper because
$N\ge2$. A finite union of proper affine subspaces of the
finite-dimensional space $\mathcal D$ contains no open ball, so every
ball around $0$ contains some $D$ outside all of them. Taking that
ball small enough to keep $\mathbf B=\beta\mathbf A+D>0$, such a $D$
has the required properties.
\end{proof}

\begin{proposition}[Accidental neutrality is isolated]
\label{prop:isolated}
In the setting of Proposition~\ref{prop:cheap}, assume moreover
$N>m$ and \textup{(H1)}. Then there is an open dense subset
$\mathcal G\subset\mathcal D$, whose complement has Lebesgue measure
zero, such that for every $D\in\mathcal G$ for which
$(A,\,B=\beta A+D)$ satisfies \textup{(H2)}, the distribution $\pi$
is the \emph{only} insensitive distribution of $(A,B)$, at any level.
\end{proposition}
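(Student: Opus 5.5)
We first linearise the insensitivity condition without (B2), exactly as in Lemma~\ref{lem:reduction}. A distribution $\rho\in\Dbar$ is insensitive at level $c$ if and only if $\langle\rho,u-cs\rangle\equiv0$. By (H1) this is equivalent to the vector identity
\[
\sum_z\rho_z\bigl[(1-c)\mathbf A(z)-c\,\mathbf B(z)\bigr]=0\in\mathbb R^N .
\]
Substituting $\mathbf B=\beta\mathbf A+D$, the condition becomes
\[
\sum_z\rho_z\bigl[(1-c-c\beta)\mathbf A(z)-c\,D(z)\bigr]=0 .
\]
This is a system of $N$ linear equations in the $m$ unknowns $\rho$, depending on the parameter $c$. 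We already know that $\rho=\pi$ is a solution at $c=1/(1+\beta)$, since there the $\mathbf A$-term vanishes and $\sum_z\pi_zD(z)=0$. The aim is to show that, for generic $D$, the $N\times(m+1)$ system in the unknowns $(\rho,c\rho)$ has no other solutions.

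Set $\tau=c\rho$. The condition reads $T\rho-c(1+\beta)T\rho-\sum_z\tau_zD(z)=0$, which is bilinear in $(\rho,c)$. We treat it by fixing $c$ and considering the $N\times m$ matrix
\[
P(c)=(1-c(1+\beta))\,A-c\,D ,
\]
whose columns are indexed by $z$. An insensitive $\rho$ at level $c$ is a nonnegative vector in $\ker P(c)$ with $\sum\rho=1$. The plan is to show that for generic $D\in\mathcal D$:
\begin{enumerate}[label=(\roman*)]
\item $\ker P(c)=0$ for every $c\neq1/(1+\beta)$;
\item $\ker P(1/(1+\beta))=\ker D=\mathbb R\pi$.
\end{enumerate}
Together these give the claim. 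The case $c=0$ needs $\ker A=0$, that is, (B1)-type independence of the $\mathbf A(z)$. This is not assumed, so we absorb it into the genericity condition as well: $c=0$ gives $\rho\in\ker T$ with $\rho\ge0$ and $\rho\ne0$, and this can be excluded by perturbing. Actually $A$ is fixed and positive, so $\sum_z\rho_z\mathbf A(z)>0$ componentwise, and this case is empty automatically.

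For (ii): $\ker D$ for $D\in\mathcal D$ always contains $\pi$. The map $D\mapsto D$ ranges over all $N\times m$ matrices with $D\pi=0$. Equivalently, $D=D_0Q$, where $Q$ is a fixed surjection $\mathbb R^m\to\mathbb R^{m-1}$ with kernel $\mathbb R\pi$ and $D_0$ is an arbitrary $N\times(m-1)$ matrix. Since $N>m-1$, $D_0$ has full column rank off the zero set of a nonzero polynomial (a maximal minor). Hence $\ker D=\mathbb R\pi$ on an open dense, full-measure set.

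For (i), the main obstacle, we must control a one-parameter family of matrices uniformly in $c$. We write $\lambda=c/(1-c(1+\beta))$ for $c\neq 1/(1+\beta)$, so that $\ker P(c)=\ker(A-\lambda D)$ and $\lambda$ ranges over $\mathbb R$. Consider the incidence set
\[
\{(D,\lambda,[\rho]) : (A-\lambda D)\rho=0\}\subset\mathcal D\times\mathbb R\times\mathbb P^{m-1}.
\]
Its projection to $\mathcal D$ is where genericity fails. For fixed $(\lambda,\rho)$, the condition $(A-\lambda D)\rho=0$ is $N$ affine conditions on $D$, namely $D\rho=\lambda^{-1}A\rho$ when $\lambda\ne0$. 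These conditions are independent: when $\rho\notin\mathbb R\pi$, the map $D\mapsto D\rho$ from $\mathcal D$ is onto $\mathbb R^N$. When $\rho\in\mathbb R\pi$, they are inconsistent, because $A\pi\neq0$. The fibre therefore has codimension $N$ in $\mathcal D$. The parameter space $(\lambda,[\rho])$ has dimension $1+(m-1)=m<N$. Hence the incidence set has dimension less than $\dim\mathcal D$, and its image is a semialgebraic set of dimension less than $\dim\mathcal D$. Such a set has measure zero and a closure with empty interior. The same count handles $\lambda=0$, which is the case $c=0$, already excluded. Finally, we must make sure the exceptional set is closed, so that its complement is open. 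Compactness of $\mathbb P^{m-1}$ handles the $\rho$-direction. In the $\lambda$-direction we use the limits $\lambda\to\pm\infty$, which correspond to $c\to1/(1+\beta)$; these are governed by (ii) applied to $\ker D$, and this is where care is needed. If this is awkward, we define $\mathcal G$ as the complement of the closure of the projection, which is still open dense of full measure by the dimension bound. (H2) is used only to make $C$ well defined.
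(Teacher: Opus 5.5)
Your argument is correct and is essentially the paper's. You use the same three cases: $c=0$, excluded because $A>0$; $c=1/(1+\beta)$, where generically $\ker D=\mathbb R\pi$; and $\lambda=c/t\neq0$, handled by a dimension count of $\dim\mathcal D-(N-m)$ for the incidence set, with $\mathcal G$ taken as the complement of the closure. The only difference is that you count fibres directly in $\mathcal D$ via surjectivity of $D\mapsto D\rho$, while the paper transports the problem to the affine space $\{Y:Y\pi=A\pi\}$ by $D\mapsto A-\lambda D$.

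One small correction: the incidence set must be restricted to $\lambda\neq0$. At $\lambda=0$ the condition $A\rho=0$ does not involve $D$, so ``the same count handles $\lambda=0$'' is false when $\ker A\neq0$, and so is your claim (i) at $c=0$. That case is disposed of only by the positivity of $\rho$, as you also note.
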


\begin{proof}
Write $A$ and $D$ also for the $N\times m$ matrices with columns
$\mathbf A(z)$ and $D(z)$, so that $\mathcal D=\{D:D\pi=0\}$, a linear
space of dimension $N(m-1)$. The first part of the proof of
Lemma~\ref{lem:reduction} does not use (B2): by (H1), $\pi'\in\Dbar$ is
insensitive at level $c$ if and only if $((1-c)A-cB)\pi'=0$, that is,
\begin{equation}
\bigl(tA-cD\bigr)\pi'=0,\qquad t:=1-c(1+\beta).
\label{eq:pencil}
\end{equation}
Put $b=A\pi$; since $A>0$ and $\pi>0$, every entry of $b$ is positive,
and $A\pi'\neq0$ for every $\pi'\in\Dbar$.

\emph{Level $c=0$.} Then \eqref{eq:pencil} reads $A\pi'=0$, which is
impossible.

\emph{Level $c=c_0:=1/(1+\beta)$.} Then $t=0$ and \eqref{eq:pencil}
reads $D\pi'=0$. The set of $D\in\mathcal D$ with $\rank D<m-1$ is a
proper algebraic subset, since $\mathcal D$ contains matrices of rank
$m-1$ (for instance $D=EP$, with $P$ a projection of $\mathbb R^Z$ with
kernel $\mathbb R\pi$ and $E$ injective on its image, which is possible
because $N\ge m-1$). For $D$ outside it, $\ker D=\mathbb R\pi$, and
the only solution in $\Dbar$ is $\pi'=\pi$.

\emph{Levels $c\notin\{0,c_0\}$.} Then $t\neq0$ and, with
$\lambda=c/t\neq0$, \eqref{eq:pencil} says that $A-\lambda D$ has a
non-zero kernel. For $\lambda\neq0$ the map $D\mapsto A-\lambda D$ is
an affine bijection of $\mathcal D$ onto the affine space
$\mathcal Y=\{Y:\ Y\pi=b\}$. We bound the dimension of the set
$\Sigma_b$ of rank-deficient $Y\in\mathcal Y$. If $Yx=0$ with $x\neq0$,
then $x\notin\mathbb R\pi$, because $Y\pi=b\neq0$. For a fixed line
$[x]\neq[\pi]$, the conditions $Yx=0$, $Y\pi=b$ are $2N$ independent
affine conditions, so they cut out an affine space of dimension
$Nm-2N$. Letting $[x]$ range over the projective space of dimension
$m-1$,
\[
\dim\Sigma_b\le (m-1)+Nm-2N=\dim\mathcal Y-(N-m+1).
\]
Hence the semialgebraic set
$\mathcal S=\{(\lambda,D)\in(\mathbb R\setminus\{0\})\times\mathcal D:
\ A-\lambda D\in\Sigma_b\}$ has fibres of dimension at most
$\dim\mathcal D-(N-m+1)$ over each $\lambda$, and therefore
$\dim\mathcal S\le\dim\mathcal D-(N-m)$. Its projection to
$\mathcal D$ has dimension at most $\dim\mathcal D-(N-m)<\dim\mathcal D$,
because $N>m$. Here we use three standard facts on semialgebraic
sets \cite[Sections~2.8 and~9.3]{BCR}: a semialgebraic set
whose fibres over a semialgebraic base of dimension $e$ all have
dimension at most $d$ has dimension at most $d+e$ (a consequence of
Hardt's triviality theorem); the dimension does not increase under a
projection; and the closure of a semialgebraic set has the same
dimension as the set. In particular, a semialgebraic subset of
$\mathcal D$ of dimension less than $\dim\mathcal D$ has a closure
with empty interior and of Lebesgue measure zero.

Let $\mathcal G$ be the complement of the closure of the union of the
exceptional sets of the last two cases. For $D\in\mathcal G$, no level
other than $c_0$ admits an insensitive distribution, and at $c_0$ only
$\pi$ does.
\end{proof}

\begin{remark}[Structural versus accidental neutrality]
\label{rem:structural}
The complement of the finitely many affine subspaces excluded in the
proof of Proposition~\ref{prop:cheap} is open and dense in
$\mathcal D$, and so is $\mathcal G$. Their intersection is therefore
open and dense, and it meets the small ball around $0$ on which
$\mathbf B=\beta\mathbf A+D>0$. Choosing $D$ in this intersection, a
single $D$ realises both conclusions: $B>0$, (B2) fails for every
$(\sigma,\kappa)$, and $\pi$ is insensitive and \emph{isolated},
provided (H2) holds. This is automatic when $f_\nu(\gamma)>0$ for all
$\nu$ and $\gamma$: then $u>0$ and $\sum_\nu f_\nu B_\nu>0$, since
$\mathbf A>0$ and $\mathbf B>0$, so $s>0$.
The condition $N>m$ is sufficient for this generic codimension
argument; for $N\le m$ the count gives no information, and we make no
claim in that case. Under the axioms, by contrast, the insensitive
distributions form the polytope $\Sbar$ of dimension $f+k-1$, at a
level determined by the skeleton alone
(Theorem~\ref{thm:characterisation}). This is the precise sense in
which the axioms do not explain neutrality, which is cheap, but
\emph{structural} neutrality. They explain neutrality that is
organised orbit by orbit, at a common level, independent of the
spectral index, and stable in a family of the expected dimension.
\end{remark}

\section{What the axioms characterise}
\label{sec:characterisation}

\begin{theorem}[Characterisation of neutrality]
\label{thm:characterisation}
Let $\mathcal S$ be a complete SDS satisfying \textup{(H1)} and
\textup{(H2)}. Then a distribution $\pi\in\Dbar$ is insensitive to
$\gamma$ if and only if $\pi\in\Sbar$, and the value is then $C^\ast$.
The set $\Sbar$ is a polytope of dimension $f+k-1$ that meets the open
simplex.
\end{theorem}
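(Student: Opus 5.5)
The plan is to derive the statement as a specialisation of Theorem~\ref{thm:master} to the involutive case, using Proposition~\ref{prop:permweights} (or directly the remark after the definition of $g_O$) to see that all levels coincide. A complete SDS satisfies (A1)--(A4), (B1), (B2), so $Z$ is finite and non-empty, $\sigma$ is a bijection, $\kappa>0$, and together with (H1), (H2) all hypotheses of Theorem~\ref{thm:master} hold. Under (A2)--(A4) every orbit has $g_O=\sqrt K$. Hence, by the ``in particular'' clause of Theorem~\ref{thm:master}, any insensitive level must satisfy $(1-c)/c=\sqrt K$, that is, $c=C^\ast$; levels $0$ and $1$ are excluded by the same theorem. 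This gives the statement about the value.

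Next I identify the insensitive set at level $C^\ast$ with $\Sbar$. Since every cycle has $g_O=\sqrt K=(1-C^\ast)/C^\ast$, condition (i) of Theorem~\ref{thm:master} reduces to closure of $\supp\pi$ under $\sigma$. This is automatic from (ii), because $\kappa>0$ propagates positivity around each orbit. Condition (ii) becomes $\sqrt K\,\pi_{\sigma z}=\kappa(z)\pi_z$ for all $z$, which is exactly the defining relation of $\Sbar$. Conversely, an element of $\Sbar$ satisfies (ii), and hence (i), on every orbit it charges. Alternatively, one can cite Theorem~\ref{thm:perm}(a) with $\sigma$ an involution, which states the same thing directly, or Theorem~\ref{thm:levelset} with $\rank M=k$ from Theorem~\ref{thm:rankM} under (B1).

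For the geometry, I use the last assertion of Theorem~\ref{thm:master}: the set is a simplex of dimension $n_c-1$, with $n_c$ the number of cycles at level $c$. Here every cycle is at level $C^\ast$, so $n_c=\cyc(\sigma)=k+f$ and the dimension is $f+k-1$. It is in particular a polytope, the convex hull of the $k+f$ normalised orbit vectors. To show that it meets the open simplex, I take the barycentre of these vertices, or invoke Corollary~\ref{cor:A4}: all cycles share $g_O=\sqrt K$, so there is an insensitive, hence neutral, distribution of full support. Explicitly, on an orbit put $\pi_{z}\propto\sqrt K$ and $\pi_{\sigma z}\propto\kappa(z)$, and put positive mass at each fixed point.

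I do not expect a real obstacle; the only care needed is bookkeeping. One point is checking that the relation for $\Sbar$ at $z$ and at $\sigma z$ are consistent via (A4), which is the cycle-consistency in Theorem~\ref{thm:master}. The other is confirming that the fixed-point relation $\sqrt K\pi_z=\kappa(z)\pi_z$ is vacuous because $\kappa(z)=\sqrt K$, which uses (A3).
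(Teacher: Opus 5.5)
Your proposal is correct and follows the paper's proof. You use (A2)--(A4) to get $g_O=\sqrt K$ on every orbit, apply Theorem~\ref{thm:master} to identify the insensitive set with $\Sbar$ at the single level $C^\ast$, and read off the dimension $f+k-1$ from its last assertion with $n_c=k+f$. Your explicit interior neutral distribution and your check that the fixed-point relation is vacuous spell out details the paper leaves implicit.
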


\begin{proof}
By (A2)--(A4) every orbit has $g_O=\sqrt K$. Apply
Theorem~\ref{thm:master}. The dimension count is its last assertion
with $n_c=k+f$.
\end{proof}

Together with Section~\ref{sec:roles}, this determines the role of
each axiom. The table collects them. Throughout the table, (A1), (H1)
and (H2) hold and $\sigma$ is a bijection. Conditions (A3) and (B2)
are also imposed, except in the row that concerns that condition. Any
further assumptions are stated in the first column; the cited result
gives the full hypotheses, and each row has been checked against
it. Rows marked $\Leftrightarrow$ are
characterisations, that is, equivalences under those hypotheses; the
other rows describe a role established by one-sided results and
examples.

\begin{center}
\begin{tabular}{@{}>{\raggedright\arraybackslash}p{0.30\textwidth}>{\raggedright\arraybackslash}p{0.44\textwidth}>{\raggedright\arraybackslash}p{0.18\textwidth}@{}}
\toprule
Axiom, with its specific hypotheses & Role or characterisation & Reference\\
\midrule
(A4); given (A2), (B1) & $\Leftrightarrow$ an interior insensitive
  distribution exists & Cor.~\ref{cor:A4}\\
(A2); given pointwise (A4), (B1) & $\Leftrightarrow$ $\rresp$ = number
  of weight moduli & Cor.~\ref{cor:A2}\\
$\rank M=k$; given (A2), (A4), without (B1) & $\Leftrightarrow$
  $\rresp=k$ $\Leftrightarrow$ $C^\ast$-level set $=\Sbar$
  & Thm.~\ref{thm:levelset}\\
(B1); given (A2), (A4) & under (A2) and (A4), excludes insensitivity
  at levels other than $C^\ast$; Ex.~\ref{ex:B1} shows what can happen
  when (B1) fails & Thm.~\ref{thm:master}, Ex.~\ref{ex:B1}\\
(B2); with $\mathbf A>0$, $N,m\ge2$ ($N>m$ for isolation), no
  $(\sigma,\kappa)$ fixed &
  without it, insensitivity of one distribution is cheap and
  generically isolated
  & Props.~\ref{prop:cheap}, \ref{prop:isolated}\\
(A3); given (A2), (A4) & selects the positive root; levels in $(0,1)$
  & Sec.~\ref{subsec:A3}\\
\bottomrule
\end{tabular}
\end{center}

Figure~\ref{fig:axioms} displays the same information as a graph,
which makes the dependencies easier to follow.

\begin{figure}[t]
\centering
\begin{tikzpicture}[
  font=\small,
  ax/.style={draw,rounded corners=3pt,thick,align=center,fill=gray!12,minimum height=7mm,inner xsep=4pt},
  res/.style={draw,align=center,minimum height=7mm,inner sep=4pt},
  mech/.style={draw,double,thick,align=center,inner sep=5pt},
  imp/.style={-{Latex[length=2.2mm]},thick},
  eqv/.style={{Latex[length=2.2mm]}-{Latex[length=2.2mm]},thick},
  role/.style={-{Latex[length=2.2mm]},thick,dashed},
  lab/.style={font=\scriptsize,fill=white,inner sep=1.5pt,align=center}
]
\node[ax] (B2) at (-4.6,0) {(B2) duality};
\node[ax] (A3) at (-0.3,0) {(A3) positivity};
\node[ax] (B1) at (3.9,0) {(B1) full rank};
\node[mech] (T) at (-0.3,-2.2) {Structure theorem (Thm.~\ref{thm:master})\\ insensitive $\Leftrightarrow$ charged cycles\\ balanced at level $1/(1+g_O)$\\[1pt] {\scriptsize assuming (A1), (H1), (H2), $\sigma$ bijective}};
\draw[imp] (B2.south) -- node[lab]{$\kappa$ independent\\ of the mode} (T.north west);
\draw[imp] (A3.south) -- node[lab]{levels in $(0,1)$} (T.north);
\draw[imp] (B1.south) ++(-0.6,0) -- node[lab]{$T$ injective} (T.north east);
\node[res] (R0) at (-5.6,-2.4) {without (B2):\\ one insensitive $\pi$\\ is cheap; isolated\\ {\scriptsize generically, if $N>m$,}\\[-1pt] {\scriptsize with (H1), (H2)}};
\draw[role] (B2.south) ++(-0.7,0) -- node[lab,pos=0.45]{Props.\\ \ref{prop:cheap}, \ref{prop:isolated}} (R0.north);
\node[ax,dashed,fill=none] (A1) at (6.1,-1.3) {(A1) finiteness};
\draw[role] (B1.east) -| node[lab,pos=0.7]{Rem.~\ref{rem:finiteness}} (A1.north);
\node[ax] (A4) at (-6.1,-4.6) {(A4) constant\\ product};
\node[res] (R2) at (-0.7,-4.6) {interior insensitive distribution;\\ one common level $C^\ast$};
\node[ax] (A2) at (-6.1,-6.0) {(A2) involution};
\node[res] (R3) at (-0.7,-6.0) {$r_{\mathrm{resp}}$ = number of weight moduli};
\draw[eqv] (A4) -- node[lab,above=1pt,fill=none]{Cor.~\ref{cor:A4}} (R2);
\draw[eqv] (A2) -- node[lab,above=1pt,fill=none]{Cor.~\ref{cor:A2}} (R3);
\draw[imp] (T.south -| R2.north) -- node[lab,left=2pt,fill=none]{(A2), (A4): all $g_O=\sqrt K$} (R2.north);
\node[res] (R4) at (4.4,-4.6) {$\operatorname{rank}M=k$\\ $\Leftrightarrow r_{\mathrm{resp}}=k$\\ $\Leftrightarrow$ $C^\ast$-level set $=\overline\Sigma$\\ {\scriptsize (Thm.~\ref{thm:levelset})}};
\coordinate (b1s) at ($(B1.south)+(0.5,0)$);
\draw[imp] (b1s) -- node[lab,pos=0.5]{(B1) $\Rightarrow$\\ $\operatorname{rank}M=k$\\ (Thm.~\ref{thm:rankM})} (b1s |- R4.north);
\node[res,double,thick] (R5) at (1.4,-7.7) {Characterisation (Thm.~\ref{thm:characterisation}), all axioms with (H1), (H2):\\ insensitive set $=\overline\Sigma$, $\dim\overline\Sigma=f+k-1$, level $C^\ast$};
\draw[imp] (R2.east) -- (2.3,-4.6) -- (2.3,-7.7 |- R5.north);
\draw[imp] (R3.south) -- (R3.south |- R5.north);
\draw[imp] (R4.south) -- (R4.south |- R5.north);
\end{tikzpicture}
\caption{How the axioms act. Shaded boxes are the axioms; the
double-framed boxes are the structure theorem and the characterisation.
Single arrows are implications, labelled by the property they supply
or by the result that proves them. Double-headed arrows are
equivalences, under the hypotheses of the cited result. Dashed arrows
record what happens without (B2), and that the finiteness half of
(A1) follows from (B1).}
\label{fig:axioms}
\end{figure}

\begin{remark}[What is not claimed]
\label{rem:notclaimed}
It would be tempting to summarise this section by saying that the
axioms are necessary and sufficient for the invariance phenomenon. In
that form the statement is false. Proposition~\ref{prop:cheap} shows
that the insensitivity of a single distribution requires none of the
axioms, and Theorem~\ref{thm:master} shows that structural
insensitivity survives the failure of (A2) and of the pointwise form
of (A4). The correct statements are those of the table: three
equivalences, each between an axiom (or the condition $\rank M=k$)
and a specific structural property of the insensitive set or of the
response, in the presence of the other axioms; and three roles, for
(B1), (B2) and (A3), established by one-sided results and examples.
\end{remark}

\section{What the axioms do not determine: orientation}
\label{sec:orientation}

The axioms fix the invariant value $C^\ast$, the separatrix $\Sbar$
and the response rank. A natural further question is whether they also
fix the \emph{orientation} of the response, that is, the sign of
$\partial_\gamma C(\pi,\gamma)$ for $\pi\notin\Sbar$. A positive answer,
in a specific form, was conjectured in the first version of Paper~III
\cite{PaperIII} and withdrawn in its revised version. The conjecture
stated that this sign coincides with the sign of
$\langle\pi-\pi^\ast,\psi(\gamma)\rangle$, so that every distribution
on one side of the separatrix responds monotonically to resetting, with
a sign determined by the side. We show that the answer is negative,
and we determine exactly, in the smallest case, when the two criteria
agree.

Consider the skeleton $Z=\{1,2\}$, $\sigma=(1\,2)$, $\kappa\equiv1$,
so that $K=1$, $C^\ast=\tfrac12$, $(k,f)=(1,0)$ and $\Sbar=\{\pi^\ast\}$
with $\pi^\ast=(\tfrac12,\tfrac12)$. Take $A=I_2$, so that (B1) holds,
and $B$ given by (B2), i.e.\ the antidiagonal permutation matrix. Let
$f_1,f_2>0$ be linearly independent spectral functions, differentiable
on $(0,1)$. Then
\[
u=(f_1,f_2),\qquad s=(f_1+f_2,\;f_1+f_2),
\]
and we write $R=f_1/f_2$ and $\pi=(\alpha,1-\alpha)$.

\begin{proposition}[The two criteria in closed form]
\label{prop:orient}
With the notation above,
\[
\partial_\gamma C(\pi,\gamma)
=\frac{(2\alpha-1)\,f_2^{2}\,R'}{(f_1+f_2)^{2}},
\qquad
\langle\pi-\pi^\ast,\psi\rangle
=\frac{(2\alpha-1)\,f_2\,(R-1)}{2\,(f_1+f_2)} .
\]
Hence
$\operatorname{sgn}\partial_\gamma C=\operatorname{sgn}(2\alpha-1)\,
\operatorname{sgn}R'$ and
$\operatorname{sgn}\langle\pi-\pi^\ast,\psi\rangle
=\operatorname{sgn}(2\alpha-1)\,\operatorname{sgn}(R-1)$.
\end{proposition}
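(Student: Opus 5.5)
Both formulas come from direct computation in the two-site model, so the plan is to write $C$ and $\psi$ explicitly and differentiate.

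\emph{First formula.} With $u=(f_1,f_2)$ and $s=(f_1+f_2,f_1+f_2)$ we have $\langle\pi,s\rangle=f_1+f_2$ for every distribution, since the entries of $s$ are equal and $\alpha+(1-\alpha)=1$. Hence
\[
C(\pi,\gamma)=\frac{\alpha f_1+(1-\alpha)f_2}{f_1+f_2}
=(1-\alpha)+(2\alpha-1)\frac{f_1}{f_1+f_2}
=(1-\alpha)+(2\alpha-1)\frac{R}{1+R},
\]
after dividing numerator and denominator by $f_2>0$. Differentiating in $\gamma$ gives $\partial_\gamma C=(2\alpha-1)R'/(1+R)^2$. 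Since $(1+R)^2=(f_1+f_2)^2/f_2^2$, this is the stated expression.

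\emph{Second formula.} By definition $\psi=(u-C^\ast s)/\langle\pi^\ast,s\rangle$ with $C^\ast=\tfrac12$ and $\langle\pi^\ast,s\rangle=f_1+f_2$. Thus
\[
\psi=\frac{1}{f_1+f_2}\Bigl(\tfrac{f_1-f_2}{2},\,\tfrac{f_2-f_1}{2}\Bigr).
\]
Next, $\pi-\pi^\ast=(\alpha-\tfrac12,\tfrac12-\alpha)$. Pairing the two vectors gives
\[
\langle\pi-\pi^\ast,\psi\rangle=\frac{2(\alpha-\tfrac12)(f_1-f_2)}{2(f_1+f_2)}=\frac{(2\alpha-1)(f_1-f_2)}{2(f_1+f_2)}.
\]
Writing $f_1-f_2=f_2(R-1)$ gives the stated expression.

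\emph{Sign statements.} These follow from the positivity of $f_2$ and of $f_1+f_2$, which is assumed. No step is a genuine obstacle. The only point needing care is to note that the denominator $\langle\pi,s\rangle$ is independent of $\pi$, because $s$ is constant across sites. That independence is what makes the derivative collapse to a single term in $R'$.
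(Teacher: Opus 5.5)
Your proposal is correct and follows the paper's proof. For $\partial_\gamma C$ the paper applies the quotient rule to $C=(\alpha f_1+(1-\alpha)f_2)/(f_1+f_2)$ and uses $f_1'f_2-f_1f_2'=f_2^2R'$, while you first write $C$ as an affine function of $R/(1+R)$; this is the same calculation in a different order, and your computation of $\psi$ and of its pairing with $\pi-\pi^\ast$ matches the paper's.
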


\begin{proof}
$C=(\alpha f_1+(1-\alpha)f_2)/(f_1+f_2)$. Differentiating, the
numerator is $\alpha(f_1'f_2-f_1f_2')+(1-\alpha)(f_2'f_1-f_2f_1')
=(2\alpha-1)(f_1'f_2-f_1f_2')$, and $f_1'f_2-f_1f_2'=f_2^2R'$. For the
second identity,
$\psi=(u-\tfrac12s)/\langle\pi^\ast,s\rangle
=\bigl(\tfrac{f_1-f_2}{2},\tfrac{f_2-f_1}{2}\bigr)/(f_1+f_2)$, and
$\pi-\pi^\ast=(\alpha-\tfrac12)(1,-1)$.
\end{proof}

\begin{theorem}[Coincidence criterion]
\label{thm:orient}
In the setting above, the two criteria predict the same sign at
$\gamma$ for every non-neutral $\pi$ if and only if
$\operatorname{sgn}R'(\gamma)=\operatorname{sgn}(R(\gamma)-1)$. Where
$R(\gamma)\neq1$ and $R'(\gamma)\ne0$, this says that they agree where $R$
moves away from $1$ and disagree where it moves towards $1$. At a
point with $R(\gamma)=1$ the geometric criterion gives $0$, and the
criteria agree only if $R'(\gamma)=0$ as well.
\end{theorem}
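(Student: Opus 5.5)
The plan is to read the theorem off the two sign formulas of Proposition~\ref{prop:orient}. For a non-neutral $\pi$ we have $\alpha\neq\tfrac12$, so $2\alpha-1\neq0$. Both expressions in that proposition contain this factor, multiplied by a factor that is strictly positive: $f_2^2/(f_1+f_2)^2$ in the first, and $f_2/(2(f_1+f_2))$ in the second. Positivity holds because $f_1,f_2>0$. Hence, at a fixed $\gamma$,
\[
\operatorname{sgn}\partial_\gamma C(\pi,\gamma)=\operatorname{sgn}(2\alpha-1)\operatorname{sgn}R'(\gamma),
\qquad
\operatorname{sgn}\langle\pi-\pi^\ast,\psi(\gamma)\rangle=\operatorname{sgn}(2\alpha-1)\operatorname{sgn}(R(\gamma)-1).
\]

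The main equivalence then follows in both directions. If $\operatorname{sgn}R'(\gamma)=\operatorname{sgn}(R(\gamma)-1)$, the two signs coincide for every non-neutral $\pi$. Conversely, suppose the signs agree for every non-neutral $\pi$. Fix one such $\pi$, for instance $\alpha=1$, and cancel the nonzero common factor $\operatorname{sgn}(2\alpha-1)$. This gives $\operatorname{sgn}R'(\gamma)=\operatorname{sgn}(R(\gamma)-1)$. The equivalence is pointwise in $\gamma$, and the sign value $0$ is allowed, so the statement covers all cases, including the degenerate ones.

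It remains to interpret the condition, which we do case by case.
\begin{itemize}
\item If $R(\gamma)\neq1$ and $R'(\gamma)\neq0$, the condition says that $R$ is increasing where $R>1$ and decreasing where $R<1$. In either case $|R-1|$ has positive derivative $\operatorname{sgn}(R-1)R'$, so the criteria agree exactly where $R$ moves away from $1$. If the signs differ, that derivative is negative and they disagree where $R$ moves toward $1$.
\item If $R(\gamma)=1$, the geometric quantity $\langle\pi-\pi^\ast,\psi\rangle$ vanishes identically in $\pi$, since its formula contains the factor $R-1$. Agreement then forces $\operatorname{sgn}R'(\gamma)=0$.
\end{itemize}

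No step is a real obstacle, because the analysis has already been done in Proposition~\ref{prop:orient}. The only point needing care is the direction ``agree for every non-neutral $\pi$'' $\Rightarrow$ the sign condition: one must check that a single non-neutral $\pi$ suffices to extract it, which works because $\operatorname{sgn}(2\alpha-1)\neq0$ can be cancelled.
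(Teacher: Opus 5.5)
Your proposal is correct and follows the same route as the paper. Both arguments read the criterion directly off Proposition~\ref{prop:orient}, cancelling the common factor $2\alpha-1$, which is nonzero off $\Sbar$. Your explicit converse, which uses a single non-neutral $\pi$, and your case analysis at $R=1$ simply spell out what the paper's one-line proof calls immediate.
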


\begin{proof}
Immediate from Proposition~\ref{prop:orient}, since the factor
$2\alpha-1$ is common to both and non-zero off $\Sbar$.
\end{proof}

\begin{corollary}[Orientation is not determined by the static data]
\label{cor:orient}
\leavevmode
\begin{enumerate}[label=\textup{(\alph*)}]
\item The conjecture fails: every pair $f_1,f_2$ with $R$ increasing
  and $R<1$ on an interval, or $R$ decreasing and $R>1$, gives a
  countermodel on that interval. Examples are
  $(f_1,f_2)=(1+\gamma,2)$, $(2+\gamma,3)$, $(1+2\gamma,3)$ and
  $(1+\gamma^2,2)$, which give opposite signs on all of $(0,1)$.
\item Disagreement can be local. For $(f_1,f_2)=(e^\gamma,2)$ the
  criteria disagree for $\gamma<\log2$ and agree for $\gamma>\log2$.
\item The skeleton and the amplitudes do not determine the
  orientation. The spectral families $(1+\gamma,2)$ and $(2,1+\gamma)$
  share $\mathcal E$, $A$ and $B$, and give opposite signs of
  $\partial_\gamma C$ for every non-neutral $\pi$.
\end{enumerate}
\end{corollary}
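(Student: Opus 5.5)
The plan is to read everything off Proposition~\ref{prop:orient} and Theorem~\ref{thm:orient}. By Theorem~\ref{thm:orient}, the criteria disagree at $\gamma$, for every non-neutral $\pi$, exactly when $\operatorname{sgn}R'(\gamma)\neq\operatorname{sgn}(R(\gamma)-1)$. So a countermodel to the conjecture on an interval is any admissible pair $(f_1,f_2)$ with $R'$ and $R-1$ of opposite strict signs there. Admissible here means positive, differentiable and linearly independent. For each listed example I will check three things: positivity on $(0,1)$; linear independence, which holds because $R=f_1/f_2$ is non-constant; and the signs of $R'$ and $R-1$. Hypotheses (H1), (H2) then hold, since $s=(f_1+f_2,f_1+f_2)>0$.

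\emph{Part (a).} For $(1+\gamma,2)$ we have $R=(1+\gamma)/2$, so $R'>0$ and $R<1$ on $(0,1)$. For $(2+\gamma,3)$ we have $R=(2+\gamma)/3<1$ and $R'>0$. For $(1+2\gamma,3)$ we have $R=(1+2\gamma)/3<1$ and $R'>0$. For $(1+\gamma^2,2)$ we have $R=(1+\gamma^2)/2<1$ and $R'=\gamma>0$. In each case the signs are opposite on all of $(0,1)$. The general statement, $R$ increasing with $R<1$, or decreasing with $R>1$, is then the same sign comparison. To be careful I will read ``increasing'' as $R'>0$. If only monotonicity is assumed, disagreement holds wherever $R'\neq0$, and at zeros of $R'$ the geometric sign is $0$ while the analytic one is not, so the criteria still differ.

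\emph{Part (b).} For $(e^\gamma,2)$, $R=e^\gamma/2$ and $R'>0$ throughout. Also $R<1$ exactly when $\gamma<\log2$, which lies in $(0,1)$. So the criteria disagree on $(0,\log2)$ and agree on $(\log2,1)$. At $\gamma=\log2$ itself, Theorem~\ref{thm:orient} gives disagreement, since $R'\neq0$ there.

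\emph{Part (c).} Both spectral families use the same $\mathcal E$, $A=I_2$ and $B$ from (B2); only the $f_\nu$ change. Swapping $f_1$ and $f_2$ replaces $R$ by $1/R$, and $(1/R)'=-R'/R^2$ has sign opposite to $R'$. By Proposition~\ref{prop:orient}, $\operatorname{sgn}\partial_\gamma C=\operatorname{sgn}(2\alpha-1)\operatorname{sgn}R'$, so the sign flips for every $\alpha\neq\tfrac12$, i.e.\ for every non-neutral $\pi$. Concretely, with $R=(1+\gamma)/2$ we have $R'>0$, and with $R=2/(1+\gamma)$ we have $R'<0$.

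There is no real obstacle. The only point needing care is the boundary case $R'=0$ in (a) and the single point $\gamma=\log2$ in (b), and both are handled directly by the last sentence of Theorem~\ref{thm:orient}.
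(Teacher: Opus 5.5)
Your proposal is correct and matches the paper, which treats the corollary as an immediate reading of Proposition~\ref{prop:orient} and Theorem~\ref{thm:orient}: you check the signs of $R'$ and $R-1$ for each example, and in (c) you use that swapping $f_1,f_2$ replaces $R$ by $1/R$. One small slip is that you swapped the names of the criteria. At a zero of $R'$ it is the analytic quantity $\partial_\gamma C$ that vanishes, while the geometric one $\langle\pi-\pi^\ast,\psi\rangle$ stays nonzero because $R<1$; your conclusion that the two criteria differ there is unaffected.
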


\begin{remark}[Attribution and reading]
\label{rem:orient}
That the orientation conjecture of the first version of \cite{PaperIII} fails was first
observed by T.~Newton \cite{Newton}, whose counterexample is the first
pair in Corollary~\ref{cor:orient}(a). The coincidence criterion of
Theorem~\ref{thm:orient} was obtained independently afterwards. The
two-site theorem of \cite{PaperIII}, which gives the sign of
$\partial_\gamma C$ as
$\operatorname{sgn}(\alpha-\alpha^\ast)\operatorname{sgn}R'$, is
correct and is confirmed by Proposition~\ref{prop:orient}. What fails
is only the identification of that sign with the geometric one.

The reading is the one announced. The axioms determine the static
data: the value $C^\ast$, the position of the separatrix and the
dimension of the response. The orientation of the response depends on
the spectral functions, through the position of $R$ relative to $1$ and
the direction in which it moves, and Corollary~\ref{cor:orient}(c)
shows that it is not a function of $(\mathcal E,A,B)$. Orientation is
\emph{dynamical} information, a layer on top of the structure and not
part of it.
\end{remark}

\section{Internal rigidity}
\label{sec:rigidity}

We now look inside a single skeleton $\mathcal E=(Z,\sigma,\kappa)$ and
ask which of the constructions familiar from vector spaces, groups or
modules it supports: internal laws of composition, quotients and
symmetries. The answer is that it supports all of them, but only in
degenerate forms that are completely determined by the
classification data of \cite[Theorem~7.2]{PaperVI}.

\subsection{Compatible internal operations}

\begin{theorem}[The weight obstructs internal operations]
\label{thm:rigidity}
Let $\mathcal E$ be a skeleton of degree $K$, and suppose $Z$ carries
an associative operation $(z,w)\mapsto z\cdot w$ for which the weight
is compatible, in the sense that
\begin{equation}
\sqrt K\,\kappa(z\cdot w)=\kappa(z)\,\kappa(w)
\qquad\text{for all }z,w\in Z .
\label{eq:compatible}
\end{equation}
Then $\kappa\equiv\sqrt K$ and all orbit ratios are $1$.
\end{theorem}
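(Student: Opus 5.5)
The plan is to normalise the weight and use finiteness of $Z$. Put $\phi(z)=\kappa(z)/\sqrt K$, which is positive by (A3). Condition \eqref{eq:compatible} then reads $\phi(z\cdot w)=\phi(z)\phi(w)$, so $\phi$ is a homomorphism from the semigroup $(Z,\cdot)$ into the multiplicative group $(0,\infty)$. Its image $\phi(Z)$ is a finite subset of $(0,\infty)$ closed under multiplication, because $\phi(z)\phi(w)=\phi(z\cdot w)\in\phi(Z)$. Here we use (A1), so $Z$ is finite and non-empty.

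Next I show that such a set is $\{1\}$. If some $t\in\phi(Z)$ had $t\neq1$, then the powers $t,t^2,t^3,\dots$ all lie in $\phi(Z)$, since $t^n=\phi(z^n)$ with $z^n$ the $n$-fold product; associativity makes $z^n$ well defined. These powers are pairwise distinct because $t>0$ and $t\ne1$, which contradicts finiteness. Hence $\phi\equiv1$, that is, $\kappa\equiv\sqrt K$. Then $r_z=(\kappa(\sigma z)/\kappa(z))^{1/2}=1$ for every $z$.

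Associativity is needed only to define powers; even without it, closure of a finite set under multiplication suffices. Taking $t=\max\phi(Z)$, we get $t^2\in\phi(Z)$, so $t^2\le t$ and $t\le1$; taking the minimum gives $t\ge1$ in the same way. I would present this max/min version, since it is the cleanest.

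The only delicate point is the very first step: confirming that compatibility is stated with the factor $\sqrt K$, so that the normalised weight is genuinely multiplicative. Once that is checked, the rest is immediate. Note that neither (A2) nor (A4) is used, except that (A4) fixes the value $\sqrt K$ through $K$.
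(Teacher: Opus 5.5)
Your proof is correct. The distinct-powers argument in your second paragraph is essentially the paper's. The paper writes $\kappa=\sqrt K\,e^{-\lambda}$, so that $\lambda$ is a semigroup homomorphism into $(\mathbb R,+)$. Finiteness then gives $z^a=z^b$ with $a<b$, hence $a\lambda(z)=b\lambda(z)$ and $\lambda(z)=0$. Your version is the multiplicative form of the same pigeonhole.

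The max/min argument you say you would present is a genuinely different route. It is global rather than elementwise. It uses only the diagonal identity $\phi(z\cdot z)=\phi(z)^2$, and only at the two sites where $\phi$ is maximal and minimal. So it needs neither associativity nor compatibility for pairs with $z\neq w$, and it never iterates the operation.

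The paper's argument, by contrast, works one element at a time. It needs only that each sequence of powers $z,z^2,z^3,\dots$ take finitely many values, so it would survive in a setting where $\max\phi$ or $\min\phi$ is not attained. Your version needs those extrema to exist, which is exactly where the finiteness in (A1) enters.

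A small correction to your side remark: associativity is not needed even to define powers. The left-normed powers $z^{n+1}=z^n\cdot z$ already satisfy $\phi(z^{n+1})=\phi(z^n)\phi(z)=\phi(z)^{n+1}$ by induction. So the paper's argument is also free of associativity, and what your max/min version really saves is the passage through powers altogether.
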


\begin{proof}
Write $\kappa=\sqrt K\,e^{-\lambda}$. Condition \eqref{eq:compatible}
says that $\lambda:(Z,\cdot)\to(\mathbb R,+)$ is a homomorphism of
semigroups. For each $z$, the powers $z,z^2,z^3,\dots$ lie in the
finite set $Z$, so $z^a=z^b$ for some $a<b$. Then
$a\lambda(z)=b\lambda(z)$, hence $\lambda(z)=0$.
\end{proof}

The theorem uses neither the involution nor any relation between the
operation and $\sigma$. The natural compatible structures are groups
in which $\sigma$ is the inversion, and for them there is an
additional arithmetic obstruction, independent of the weight.

\begin{proposition}[Abelian group structures]
\label{prop:groups}
Let $(Z,\sigma)$ be a finite set with an involution, $|Z|=m$, with $f$
fixed points. There is an abelian group structure on $Z$ in which
$\sigma(z)=-z$ if and only if $f=2^j$, where $j=0$ if $m$ is odd and
$1\le j\le v_2(m)$ if $m$ is even. Here $v_2$ is the $2$-adic
valuation.
\end{proposition}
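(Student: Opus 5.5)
The plan is to reduce the statement to a count of the $2$-torsion in finite abelian groups. The only invariant of $(Z,\sigma)$ up to relabelling is the pair $(m,f)$: two involutions of sets of the same size with the same number of fixed points are conjugate by a bijection, since both are products of $(m-f)/2$ disjoint transpositions. So $(Z,\sigma)$ admits an abelian group structure with $\sigma(z)=-z$ if and only if some abelian group $G$ of order $m$ has exactly $f$ elements fixed by inversion. Transport of structure along a bijection $Z\to G$ sending $\sigma$ to $x\mapsto -x$ takes care of both directions.

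\emph{Necessity.} The fixed points of $x\mapsto -x$ are the solutions of $2x=0$, which form the subgroup $G[2]$. I would write $G\cong\bigoplus_p G_p$ as the sum of its Sylow subgroups and decompose the $2$-part $G_2\cong\bigoplus_{i=1}^{j}\mathbb Z/2^{a_i}$ with $a_i\ge1$. Then $G[2]=G_2[2]\cong(\mathbb Z/2)^j$, so $f=2^j$ with $j$ the $2$-rank. If $m$ is odd, $G_2=0$ and $j=0$. If $m$ is even, $|G_2|=2^{v_2(m)}=2^{\sum a_i}$ is nontrivial. Hence $j\ge1$, and $j\le\sum a_i=v_2(m)$.

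\emph{Sufficiency.} For $m$ odd I would take $G=\mathbb Z/m$, where only $0$ is fixed, so $f=1$. For $m$ even and $1\le j\le v_2(m)$ I would take
\[
G=(\mathbb Z/2)^{j-1}\oplus\mathbb Z/\bigl(m/2^{j-1}\bigr).
\]
Since $j-1<v_2(m)$, the integer $m/2^{j-1}$ is even, so the cyclic factor contributes exactly one more factor $\mathbb Z/2$ to $G[2]$. This gives $|G[2]|=2^j$ and $|G|=m$. The transport argument then finishes the proof.

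Nothing here is a genuine obstacle. The one point needing care is the conjugacy step, which is what turns the existence question about $(Z,\sigma)$ into one about abstract groups of order $m$. One should also check that the upper bound $j\le v_2(m)$ is attained: for $j=v_2(m)$ the construction gives $(\mathbb Z/2)^{v_2(m)-1}\oplus\mathbb Z/(2\cdot\text{odd})$, which is still valid. The theorem assumes only that $\sigma$ is an involution with $f$ fixed points, so no weight enters, consistent with the remark preceding the proposition.
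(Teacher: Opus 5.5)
Your proof is correct and follows the paper's route. You identify the fixed points of inversion with $G[2]\cong(\mathbb Z/2)^j$, bound $j$ through the primary decomposition of the $2$-part, realise each admissible $j$ by an explicit group, and transport that group to $Z$ by a bijection matching fixed points and pairs. Your group $(\mathbb Z/2)^{j-1}\oplus\mathbb Z/(m/2^{j-1})$ is just one choice, in the paper's construction, of exponents $a_i\ge1$ summing to $v_2(m)$.
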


\begin{proof}
In such a structure the fixed points of $\sigma$ are the elements with
$2z=0$. They form the $2$-torsion subgroup, which is elementary
abelian of rank $j$ equal to the number of cyclic factors of even
order in the primary decomposition of the group. The $2$-primary
part is $\bigoplus_{i=1}^j\mathbb Z/2^{a_i}$ with $a_i\ge1$ and $\sum
a_i=v_2(m)$. So $j\le v_2(m)$, $j\ge1$ when $m$ is even (by Cauchy's
theorem), and $j=0$ when $m$ is odd. Conversely, given such $j$,
choose $a_1,\dots,a_j\ge1$ with sum $v_2(m)$, and let $G$ be the
product of the corresponding cyclic $2$-groups with a cyclic group of
order $m/2^{v_2(m)}$. Then $G$ has exactly $2^j=f$ elements with
$g=-g$ and $(m-f)/2$ pairs $\{g,-g\}$. Any bijection $Z\to G$ sending
fixed points to $2$-torsion and orbits to pairs transports the group
structure, with $\sigma$ becoming the inversion.
\end{proof}

\begin{corollary}
\label{cor:f0}
A skeleton with $f=0$ admits no group structure, abelian or not, in
which $\sigma$ is the inversion, because the neutral element is
always fixed. For abelian groups, Proposition~\ref{prop:groups}
gives: for $m=6$ the only possible value of $f$ is $2$
(realised by $\mathbb Z/6$), for $m=4$ the possible values are $2$ and
$4$, and for $m=12$ they are $2$ and $4$. In particular $m=12$,
$f=6$ is impossible.
\end{corollary}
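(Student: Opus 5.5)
The plan has two independent parts: a one-line argument for the non-abelian claim, and a direct specialisation of Proposition~\ref{prop:groups} for the numerical claims.

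For the first assertion I would not invoke Proposition~\ref{prop:groups} at all, since it concerns only abelian groups. Suppose $Z$ carries any group structure, not necessarily abelian, with neutral element $e$ and with $\sigma(z)=z^{-1}$. Then $\sigma(e)=e^{-1}=e$, so $e$ is a fixed point of $\sigma$ and $f\ge1$. Hence a skeleton with $f=0$ admits no such structure. This step uses no property of $\kappa$ and no commutativity.

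For the numerical statements I would specialise Proposition~\ref{prop:groups}. That proposition says $f=2^j$, with $j=0$ when $m$ is odd and $1\le j\le v_2(m)$ when $m$ is even; it also guarantees that every such value of $f$ is realised. The three cases are then:
\begin{itemize}[nosep]
\item For $m=6$ we have $v_2(6)=1$, so $j=1$ and $f=2$. As a sanity check in $\mathbb Z/6$, the elements with $2z=0$ are $0$ and $3$, and the remaining four elements form the pairs $\{1,5\}$ and $\{2,4\}$.
\item For $m=4$ we have $v_2(4)=2$, so $j\in\{1,2\}$ and $f\in\{2,4\}$. These are realised by $\mathbb Z/4$ and $(\mathbb Z/2)^2$ respectively.
\item For $m=12$ again $v_2(12)=2$, so $f\in\{2,4\}$. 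These are realised by $\mathbb Z/12$ and $\mathbb Z/2\times\mathbb Z/6$.
\end{itemize}
Finally, $f=6$ is not a power of $2$, so the case $m=12$, $f=6$ is impossible.

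No step here is a real obstacle. The only point needing care is to keep the non-abelian claim separate from the abelian count, and to note explicitly that the neutral element is always a fixed point of inversion.
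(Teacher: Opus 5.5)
Your proposal is correct and follows the paper's own argument: the neutral element is fixed by inversion in any group, and the numerical claims are read off from Proposition~\ref{prop:groups} using $v_2(6)=1$ and $v_2(4)=v_2(12)=2$. Your explicit realisations by $\mathbb Z/4$, $(\mathbb Z/2)^2$, $\mathbb Z/12$ and $\mathbb Z/2\times\mathbb Z/6$ are correct. They are an extra check of the existence half, which the proposition already guarantees.
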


\begin{remark}
For non-abelian groups the inversion is still an involution, but the
count of its fixed points, the elements of order at most $2$, is not
restricted to powers of $2$. The dihedral group of order $10$ has
$f=6$. Theorem~\ref{thm:rigidity} applies in every case: whatever
group is chosen, a compatible weight is constant. The arithmetic of
Proposition~\ref{prop:groups} is not visible in the classification of
\cite{PaperVI}, which records $f$ but not its arithmetic.
\end{remark}

\subsection{Congruences and quotients}

\begin{definition}
A \emph{congruence} on a skeleton $\mathcal E$ is an equivalence
relation $\sim$ on $Z$ such that $z\sim w$ implies $\sigma z\sim\sigma
w$ and $\kappa(z)=\kappa(w)$.
\end{definition}

\begin{proposition}[Quotients]
\label{prop:quotients}
\leavevmode
\begin{enumerate}[label=\textup{(\alph*)}]
\item If $\sim$ is a congruence, then $Z/{\sim}$ with the induced
  $\bar\sigma$ and $\bar\kappa$ is a skeleton of the same degree, and
  the projection is a surjective morphism.
\item Every surjective morphism $\phi:\mathcal E\to\mathcal E'$
  factors as the projection onto $\mathcal E/{\ker\phi}$ followed by
  an isomorphism, where $z\ker\phi\,w$ if and only if
  $\phi(z)=\phi(w)$.
\item There is a largest congruence, $z\equiv_\kappa w$ if and only if
  $\kappa(z)=\kappa(w)$. The quotient $\mathcal E/{\equiv_\kappa}$ is
  isomorphic to the \emph{reduced skeleton} on the set of values
  $V=\kappa(Z)\subset\mathbb R_{>0}$, with involution $v\mapsto K/v$
  and weight the identity.
\end{enumerate}
\end{proposition}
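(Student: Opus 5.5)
The three parts are routine, so I will check them in order and rely only on the definitions. The notion of morphism I use is the one implicit in \cite{PaperVI}: a map $\phi:Z\to Z'$ between skeletons of the same degree with $\phi\circ\sigma=\sigma'\circ\phi$ and $\kappa'\circ\phi=\kappa$.

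For (a), I set $\bar\sigma[z]=[\sigma z]$ and $\bar\kappa[z]=\kappa(z)$. Both are well defined by the two clauses in the definition of a congruence. Then (A1) holds because the quotient of a finite non-empty set is finite and non-empty. (A2) holds because $\bar\sigma^2[z]=[\sigma^2z]=[z]$, and (A3) is inherited from $\kappa$. For (A4), $\bar\kappa[z]\bar\kappa(\bar\sigma[z])=\kappa(z)\kappa(\sigma z)=K$, so the degree is unchanged. The projection $z\mapsto[z]$ intertwines $\sigma$ with $\bar\sigma$ and $\kappa$ with $\bar\kappa$ by construction, and it is surjective.

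For (b), I first check that $\ker\phi$ is a congruence. If $\phi(z)=\phi(w)$, then $\phi(\sigma z)=\sigma'\phi(z)=\sigma'\phi(w)=\phi(\sigma w)$, and $\kappa(z)=\kappa'(\phi z)=\kappa'(\phi w)=\kappa(w)$. The induced map $\bar\phi[z]=\phi(z)$ is then well defined, injective by the definition of $\ker\phi$, and surjective because $\phi$ is. It commutes with the involutions and preserves the weights, since $\phi$ does. Its inverse is also a morphism, so $\bar\phi$ is an isomorphism and $\phi=\bar\phi\circ p$.

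For (c), I let $\equiv_\kappa$ denote equality of $\kappa$-values. Every congruence is contained in $\equiv_\kappa$ by definition, so I only need to show that $\equiv_\kappa$ is itself a congruence, i.e.\ that it is compatible with $\sigma$. This is the one step that uses (A4). If $\kappa(z)=\kappa(w)$, then $\kappa(\sigma z)=K/\kappa(z)=K/\kappa(w)=\kappa(\sigma w)$.

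To identify the quotient, I consider $\kappa:Z\to V$. It is surjective, and its kernel relation is exactly $\equiv_\kappa$. On $V$ the map $v\mapsto K/v$ is an involution that preserves $V$, because $K/\kappa(z)=\kappa(\sigma z)$. With weight $\mathrm{id}_V$, the conditions (A3) and (A4) read $v>0$ and $v\cdot K/v=K$. So $(V,v\mapsto K/v,\mathrm{id})$ is a skeleton of degree $K$. The map $\kappa$ is a morphism onto it: $\kappa(\sigma z)=K/\kappa(z)$ and $\mathrm{id}(\kappa(z))=\kappa(z)$. Part (b) then gives $\mathcal E/{\equiv_\kappa}\cong V$.

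No step is a genuine obstacle. The only point that needs care is the $\sigma$-compatibility of $\equiv_\kappa$, which holds precisely because of the pointwise product axiom (A4).
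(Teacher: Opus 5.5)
Your proof is correct and follows the paper's argument essentially step by step. The only difference is in (c): you first check that $(V,\,v\mapsto K/v,\,\mathrm{id})$ is a skeleton and then obtain $\mathcal E/{\equiv_\kappa}\cong V$ by applying (b) to the surjective morphism $\kappa:\mathcal E\to V$, whereas the paper states the identification $[z]\mapsto\kappa(z)$ directly.
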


\begin{proof}
(a) $\bar\sigma$ and $\bar\kappa$ are well defined by the two
conditions, $\bar\sigma^2=\mathrm{id}$, $\bar\kappa>0$, and
$\bar\kappa([z])\bar\kappa(\bar\sigma[z])=\kappa(z)\kappa(\sigma z)=K$.
The projection commutes with the involutions and preserves weights by
construction. (b) $\ker\phi$ is a congruence because $\phi$ commutes
with $\sigma$ and preserves $\kappa$. The induced map
$Z/{\ker\phi}\to Z'$ is a bijective morphism, and its inverse
preserves $\sigma$ and $\kappa$, so it is an isomorphism
\cite[Definition~7.1]{PaperVI}. (c) $\equiv_\kappa$ is a congruence,
because $\kappa(z)=\kappa(w)$ implies
$\kappa(\sigma z)=K/\kappa(z)=K/\kappa(w)=\kappa(\sigma w)$. Every
congruence is contained in it by definition. The map $[z]\mapsto
\kappa(z)$ identifies the quotient with $V$, and $\bar\sigma$ becomes
$v\mapsto K/v$.
\end{proof}

\begin{remark}
A congruence may identify the two sites of an orbit only when
$r_O=1$, in which case the orbit becomes a fixed point of the
quotient \cite[Remark~3.6]{PaperVI}. The congruences of $\mathcal E$
are therefore the equivalence relations refining $\equiv_\kappa$ and
compatible with $\sigma$. On each pair of level sets
$\{\kappa=v\}\cup\{\kappa=K/v\}$ with $v\neq\sqrt K$, they are the
$\sigma$-equivariant partitions, and on $\{\kappa=\sqrt K\}$ they are
the $\sigma$-stable ones. Since $\kappa$ takes at most $2k+1$ values
and is generically injective on non-fixed sites, a generic skeleton
has few congruences. This is the same rigidity again.
\end{remark}

\subsection{Automorphisms}

\begin{proposition}[Automorphism group]
\label{prop:aut}
Let $\mathcal E$ have $f$ fixed points, $n_1$ orbits with $r_O=1$, and,
for each class $[r]\neq[1]$, $n_{[r]}$ orbits of class $[r]$. Then
\[
\Aut(\mathcal E)\;\cong\;S_f\times\bigl(S_2\wr S_{n_1}\bigr)\times
\prod_{[r]\neq[1]}S_{n_{[r]}},
\qquad
|\Aut(\mathcal E)|=f!\;2^{n_1}n_1!\prod_{[r]\neq[1]}n_{[r]}! .
\]
\end{proposition}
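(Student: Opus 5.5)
An automorphism of $\mathcal E$ is a bijection $\phi:Z\to Z$ with $\phi\circ\sigma=\sigma\circ\phi$ and $\kappa\circ\phi=\kappa$, following \cite[Definition~7.1]{PaperVI}. The plan is to decompose $Z$ into blocks that every automorphism must preserve, and then compute the stabiliser on each block.

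\emph{Step 1: preserved blocks.} Commuting with $\sigma$ means $\phi$ maps fixed points to fixed points and non-trivial orbits to non-trivial orbits. Preserving $\kappa$ pins down each orbit's class. Indeed, an orbit $\{z,\sigma z\}$ carries the unordered pair of values $\{\sqrt K/r_z,\sqrt K\,r_z\}$, which is determined by, and determines, $[r_O]$. So $\phi$ permutes the orbits of each class $[r]$ among themselves. The blocks are therefore: the set $F$ of fixed points, the union of the $n_1$ orbits with $r_O=1$, and, for each $[r]\neq[1]$, the union of the $n_{[r]}$ orbits of that class. Hence $\Aut(\mathcal E)$ is the direct product of the groups of admissible bijections of each block.

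\emph{Step 2: each factor.} On $F$, $\sigma$ is the identity and $\kappa\equiv\sqrt K$, so every permutation of $F$ is allowed. This gives $S_f$.

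On the orbits with $r_O=1$, $\kappa\equiv\sqrt K$, so the only constraint is commuting with $\sigma$. The centraliser of a fixed-point-free involution on $2n_1$ points is the hyperoctahedral group $S_2\wr S_{n_1}$. To see this, $\phi$ permutes the orbits, and on each orbit it may independently choose either of the two $\sigma$-compatible matchings with the target orbit.

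On a class $[r]\neq[1]$, each orbit has two sites with distinct weights $\sqrt K/r$ and $\sqrt K\,r$. Preserving $\kappa$ forces the site of weight $\sqrt K/r$ to go to the site of the same weight in the image orbit. Once the orbit permutation is chosen, the map is therefore determined, and it automatically commutes with $\sigma$. This gives $S_{n_{[r]}}$.

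\emph{Step 3: conclusion.} Conversely, any tuple of such block bijections assembles to an automorphism, so the product decomposition is an isomorphism. The order formula follows by multiplying the factor orders.

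The main point needing care is Step 1's identification of blocks. It requires the unordered weight pair to determine $[r]$ and to separate $[1]$ from the other classes, which uses $r\mapsto\sqrt K r$ being injective and the equality $\kappa(z)=\kappa(\sigma z)$ holding iff $r_O=1$. One must also check that $\phi$ cannot send an orbit with $r_O=1$ onto two fixed points. This is excluded because $\phi$ commutes with $\sigma$, so it maps $\sigma$-fixed points bijectively onto $\sigma$-fixed points. In the case $[r]=[r^{-1}]$ one should note that the representative choice does not matter, since the class is unordered.
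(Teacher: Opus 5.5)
Your proof is correct and follows the same route as the paper's: $\sigma$-equivariance sends fixed points to fixed points and orbits to orbits, and preserving $\kappa$ keeps each class $[r]$ and forces the site matching when $r\neq1$. The free swap on the $r_O=1$ orbits gives the factor $S_2\wr S_{n_1}$; your explicit block decomposition and reassembly step just spell out the paper's ``conversely, any such choice defines an automorphism.''
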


\begin{proof}
An automorphism commutes with $\sigma$, so it maps fixed points to
fixed points and orbits to orbits. It preserves $\kappa$, so it maps
an orbit to an orbit of the same class $[r]$. Since
$\kappa=(\sqrt K/r,\sqrt K\,r)$ on an orbit, when $r\neq1$ the two
sites carry different weights, and an orbit of class $[r]$ (ratio $r$
or $1/r$ at its representative) carries the same two values; so once
the image orbit is chosen, the image of each site is forced. When
$r=1$, $\kappa$ is constant on the orbit and the two sites may be
exchanged, which produces the factor $2$ for each orbit of class
$[1]$ and no such factor for the other classes. Conversely, any such
choice defines an automorphism.
\end{proof}

\begin{corollary}
$\Aut(\mathcal E)$ is trivial if and only if $f\le1$, $n_1=0$, and
the classes $[r_O]$ are pairwise distinct. This is the generic
situation.
\end{corollary}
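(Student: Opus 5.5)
The corollary is a direct reading of the order formula in Proposition~\ref{prop:aut}. I would argue directly from the product formula there rather than from the group structure. The order is
\[
|\Aut(\mathcal E)|=f!\;2^{n_1}n_1!\prod_{[r]\neq[1]}n_{[r]}!,
\]
a product of positive integers. So $\Aut(\mathcal E)$ is trivial exactly when every factor equals $1$.

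I would go through the factors one at a time.
\begin{itemize}
\item $f!=1$ if and only if $f\le1$.
\item $2^{n_1}n_1!=1$ if and only if $n_1=0$. For $n_1\ge1$ the factor $2^{n_1}\ge2$ already fails; concretely, the swap of the two sites of an orbit with $r_O=1$ is a non-trivial automorphism.
\item For each class $[r]\neq[1]$, $n_{[r]}!=1$ if and only if $n_{[r]}\le1$. This is exactly the condition that no two orbits share a class. Since every orbit with $r_O\ne1$ lies in some class $[r]\ne[1]$, and there are none with $[r]=[1]$ once $n_1=0$, the condition is that the classes $[r_O]$ are pairwise distinct over all non-trivial orbits.
\end{itemize}
Combining the three gives the equivalence. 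The only care needed is the bookkeeping that $n_1=0$ together with ``$n_{[r]}\le1$ for $[r]\ne[1]$'' is the same as ``$n_1=0$ and all $[r_O]$ pairwise distinct''. This holds because $[r_O]=[1]$ forces $n_1\ge1$.

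For the genericity claim I would use the parametrisation $\kappa=(\sqrt K/r_O,\sqrt K\,r_O)$ of \cite[Proposition~3.1]{PaperVI}, with free parameters $r_O\in\mathbb R_{>0}$, one for each non-trivial orbit. For fixed $(Z,\sigma,K)$ there are two kinds of degenerate condition.
\begin{itemize}
\item $r_O=1$ for some orbit $O$.
\item $r_O=r_{O'}^{\pm1}$ for some pair of orbits $O\neq O'$.
\end{itemize}
Each of these cuts out a hypersurface in $(\mathbb R_{>0})^k$, and there are finitely many of them. Their complement is therefore open, dense and of full measure, and on it the $r$-conditions of the corollary hold.

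The condition $f\le1$ is not a condition on the weights but on the combinatorial type of the skeleton. So the sense of ``generic'' should be read as generic in the weight moduli for a fixed involution with $f\le1$; when $f\ge2$ the symmetric group $S_f$ always survives. The main (mild) obstacle is stating this scope honestly. Everything else is immediate from Proposition~\ref{prop:aut}.
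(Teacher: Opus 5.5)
Your proposal is correct and is essentially the paper's argument, which states the corollary without proof as a direct factor-by-factor reading of the order formula in Proposition~\ref{prop:aut}. Your hypersurface argument in the weight moduli $r_O$ is the right sense of ``generic''. You are also right to flag that $f\le1$ is a restriction on the combinatorial type rather than a generic condition on the weights.
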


\subsection{Reading}

The three results say the same thing. The weight is the obstruction
to internal operations (Theorem~\ref{thm:rigidity}), the obstruction
to identifying sites (Proposition~\ref{prop:quotients}), and the
obstruction to symmetry (Proposition~\ref{prop:aut}). In each case
the answer is computed from the classification data
$(K,k,f,\{[r_O]\})$ of \cite[Theorem~7.2]{PaperVI}, and no new
invariant appears. This is a result and not a shortcoming. When a
structure is classified in its founding paper, the analogue of the
programme ``subobjects, quotients, dependence, generation'' that is so
fruitful for vector spaces exhausts itself quickly, and the richness
lies in the class. There, the product splits orbits, and the
resulting arithmetic is non-trivial \cite[Remark~4.7]{PaperVI}.

\section{Conclusions}
\label{sec:conclusions}

The axioms of a spectral duality structure are independent, and each
of them does identifiable work. The analysis, however, relocates that
work. The mechanism of insensitivity does not come from the involution,
nor from the pointwise product condition. It comes from a duality
whose weight does not depend on the spectral index, which balances
each cycle at the level $1/(1+g_O)$ given by the geometric mean of the
weight (Theorem~\ref{thm:master}). The universal value $C^\ast$ is what
this level becomes when (A2) and (A4) make $g_O=\sqrt K$ for every
orbit. On top of this mechanism:
\begin{itemize}
\item the constant product makes all levels coincide, which is
  equivalent to the existence of an interior insensitive
  distribution;
\item the involution matches the response with the moduli, one
  direction and one parameter per orbit;
\item positivity selects the root;
\item the full-rank condition splits into a part that fixes the
  response rank and the separatrix, the condition $\rank M=k$, and a
  part that excludes insensitivity at any other level;
\item without the duality, the insensitivity of a single distribution
  is cheap and, for $N>m$, generically isolated; with it, and the
  other axioms,
  neutrality is structural.
\end{itemize}
What the axioms do \emph{not} fix is the orientation of the response.
It depends on the spectral functions and not on the skeleton or the
amplitudes (Corollary~\ref{cor:orient}), so it is dynamical
information on top of the static structure.
One of the original hypotheses, full rank, turned out to be stronger
than the response rank theorem needs, and the theorem gained scope
when the condition actually used was identified
(Theorem~\ref{thm:rankM}). This is the kind of outcome an axiomatic
analysis should produce. Another, the involution, turned out not to
be needed for the invariance at all, and its true role is a statement
about the response.

A single structure, finally, is rigid. Compatible internal operations
force a constant weight, quotients and symmetries are computed from the
classification data, and the absence of fixed points excludes any
group structure. The structure worth studying is that of the class,
and of the objects that realise it; Section~\ref{sec:further} lists the
questions that this paper leaves open.

\section{Further directions}
\label{sec:further}

The object of this paper is the axiomatics of a single spectral
duality structure. The following questions lie outside its scope, but
are suggested by it.

\paragraph{The class.} Section~\ref{sec:rigidity} shows that the
internal algebra of one structure is rigid, and that its richness lies
in the class. The natural continuation is the monoid of isomorphism
classes under the Cartesian product of \cite{PaperVI}: its
factorisation theory, its irreducible elements and its Grothendieck
group. This will be the subject of a subsequent paper.

\paragraph{Realisability.} The axioms were extracted from random walks
with geometric resetting, but nothing in them refers to probability.
Two questions follow. The first is which structures are realised by
concrete objects, inside probability (positive Markovian resetting
processes) and outside it, and whether the realisable structures form
a subclass closed under the operations of \cite{PaperVI}. The second
is what the axiomatic analysis says about a given realised class: which
of the axioms are automatic in it, and what the general results of
Sections~\ref{sec:roles}--\ref{sec:characterisation} imply for it
without further computation.

\paragraph{Weighted permutations.} Theorem~\ref{thm:perm} shows that
the invariance mechanism survives when the involution is replaced by
an arbitrary permutation whose cycles are balanced at a common level.
Such weighted permutations form a natural enlargement of the class, in
which the involutive structures are characterised by the matching of
response and moduli (Corollary~\ref{cor:A2}). Whether the algebra of
\cite{PaperVI} extends to this enlargement is open.

\paragraph{The exact role of full rank.} Under (A1)--(A4), (B2), (H1)
and (H2), full rank is sufficient to exclude insensitivity at levels
other than $C^\ast$, and Example~\ref{ex:B1} shows that $\rank M=k$ is
not. Under the same hypotheses without (B1), Lemma~\ref{lem:reduction}
shows that the exact condition is that
$TL_c\pi\neq0$ for every $\pi\in\Dbar$ and every $c\neq C^\ast$; a
description of this condition in terms of the amplitudes is open.
Likewise, Proposition~\ref{prop:isolated} is proved for $N>m$, and the
case $N\le m$ is open.

\paragraph{Orientation.} Theorem~\ref{thm:orient} settles the agreement
between the geometric and analytic sign criteria in the two-site case.
A criterion for general structures is open.

\section*{Acknowledgements}

The author thanks Tony Newton for stimulating discussions throughout
the development of this research programme, and in particular for the
counterexample discussed in Section~\ref{sec:orientation}. He is also
grateful to several colleagues whose comments on this and related work
helped to improve it.

The author used AI tools (Claude and ChatGPT) for English-language editing and numerical checks during the preparation of this work. The author checked all mathematical statements and proofs and takes full responsibility for the content.

\end{document}